\documentclass[11pt]{article}
       \usepackage{amsfonts}
       \usepackage{stmaryrd}
       \usepackage{latexsym,amssymb,mathrsfs,fancyhdr}
       \font\tenmsb=msbm10
       \font\sevenmsb=msbm7
       \font\fivemsb=msbm5
       \catcode`\@=11
       \ifx\amstexloaded@\relax\catcode`\@=\active
       \endinput\else\let\amstexloaded@\relax\fi
       \def\spaces@{\space\space\space\space\space}
       \def\spaces@@{\spaces@\spaces@\spaces@\spaces@\spaces@}
       \def\space@.  {\futurelet\space@\relax}
       \space@.
       \def\Err@#1{\errhelp\defaulthelp@\errmessage{AmS-TeX error: #1}}
       \def\relaxnext@{\let\next\relax}
       \def\accentfam@{7}
       \def\noaccents@{\def\accentfam@{0}}
       \def\Cal{\relaxnext@\ifmmode\let\next\Cal@\else
       \def\next{\Err@{Use \string\Cal\space only in math mode}}\fi\next}
       \def\Cal@#1{{\Cal@@{#1}}}
       \def\Cal@@#1{\noaccents@\fam\tw@#1}
       \def\Bbb{\relaxnext@\ifmmode\let\next\Bbb@\else
       \def\next{\Err@{Use \string\Bbb\space only in math mode}}\fi\next}
       \def\Bbb@#1{{\Bbb@@{#1}}}
       \def\Bbb@@#1{\noaccents@\fam\msbfam#1}
       \newfam\msbfam
       \textfont\msbfam=\tenmsb
       \scriptfont\msbfam=\sevenmsb
       \scriptscriptfont\msbfam=\fivemsb
\usepackage{dsfont}
\usepackage[german,english]{babel}
\usepackage{amsmath,amssymb}
\usepackage[square, comma, sort&compress, numbers]{natbib}
\usepackage{cases}
\usepackage{mathrsfs}
\usepackage{amsmath}
\usepackage{amsthm}
\usepackage{amsfonts}
\usepackage{amssymb}
\usepackage{latexsym}
\usepackage{fancyhdr}
\usepackage{extarrows}
\usepackage{geometry}
\usepackage{color}
\geometry{left=2.5cm,right=2.5cm,top=2.5cm,bottom=2.5cm}

\usepackage[comma,numbers,square,sort&compress]{natbib}
\usepackage{epstopdf}
\usepackage{graphicx,amsmath}
\usepackage{subfigure}
\usepackage{amssymb}
\theoremstyle{plain}
\newtheorem{theorem}{Theorem}[section]

\newtheorem{lemma}[theorem]{Lemma}
\newtheorem{example}[theorem]{Example}

\numberwithin{equation}{section}
\newtheorem{remark}[theorem]{Remark}
\newtheorem{definition}[theorem]{Definition}

\usepackage{enumerate}

\begin{document}

\numberwithin{equation}{section}
\title{Further   properties and representations of \\ the $W$-weighted  $m$-weak group inverse}

\author{{\ Jiale Gao$^{a}$,    Qing-Wen Wang$^{a,b}$\footnote{E-mail address: wqw@t.shu.edu.cn  (Q.W. Wang).},   Kezheng Zuo$^{c}$, }
\\ {\small$^{a}$Department of Mathematics and Newtouch Center for Mathematics,}\\{\small  Shanghai University, Shanghai 200444, China}
\\
{{\small $^b$Collaborative Innovation Center for the Marine Artificial Intelligence, Shanghai 200444, China}}
\\{{\small $^c$School of Mathematics and Statistics, Hubei Normal University, Huangshi 435002, China}}
}

\maketitle

{\noindent{\bf Abstract:}
The purpose of this paper is to explore   more  properties and representations of the $W$-weighted $m$-weak group (in short, $W$-$m$-WG) inverse.  We  first explore an interesting relation  between two projectors with respect to  the $W$-$m$-WG inverse.
 Then, the $W$-$m$-WG inverse is represented by various generalized inverses including $W$-weighted Drazin inverse,    $W$-weighted weak group inverse, $W$-weighted core inverse, etc. We also give three concise explicit expressions   for the $W$-$m$-WG inverse. Moreover, a canonical form of the $W$-$m$-WG inverse is presented in terms of the singular value decomposition. Finally, several numerical examples  are designed to illustrate some results given in the paper.
}

\begin{description}
\item[{\bf Key words}:] {$W$-weighted $m$-weak group inverse,  $m$-weak group inverse,  singular value decomposition}
\item[{\bf AMS subject classifications}:] 15A09, 15A24
\end{description}

{\section{Introduction}}\label{introductionsection}
Since Penrose  \cite{PenroseMpire} defined the Moore-Penrose   inverse using  four matrix equations in 1955, numerous papers on different topics of  generalized  inverses and their applications have been published \cite{Benbookre,Draganabookre,Wangbookre}.
In 1958, Drazin \cite{Drazininversere} introduced the Drazin inverse in associative rings and semigroups. Owing to  its significant spectral properties,  it is now utilized extensively in diverse areas of research.
A special case of the Drazin inverse is called the  group inverse studied by
Erdelyi \cite{Erdegroupinversere} in 1967.
Cline and Greville \cite{wdrazinclinere} established the  $W$-weighted Drazin  inverse in 1980 through a weight matrix, which generalizes the concept of the Drazin inverse.

The core inverse  was first proposed by Baksalary and Trenkler \cite{Bakscoreinversere} in 2010 as an alternative to the group inverse.
Manjunatha Prasad and Mohana \cite{PrasadacoreEPinversere} then extended the core inverse in 2014 by defining  the core-EP inverse.
 In 2018, Ferreyra et al. \cite{wcoreepiFerre}   proposed the weighted core-EP decomposition  to introduce  the weighted core-EP inverse.

Wang et al. \cite{Wangwaekgropinverre} introduced the weak group inverse in 2018 because the generalizations of the group inverse don't receive as much attention as that of the core inverse.
Ferreyra et al. \cite{wweakgroupFerre}  defined the $W$-weighted weak group inverse in 2019 to extend  the weak group inverse.

In 2021,  Zhou et al. \cite{wmweakgroupringZhoure} introduced the $m$-weak group inverse in an   involutive ring, which coincides with the  weak group inverse as $m=1$.
Soon afterward,  Jiang and Zuo \cite{wmweakgroupJiare} studied  further characterizations for the $m$-weak group inverse of a complex matrix.
 In 2023, Ferreyra and  Malik \cite{GGinverseFerryre} introduced the   generalized group  inverse, which is the  $m$-weak group inverse  for $m=2$ in essence.
 Mosi\'{c} et al. \cite{Mosinewmwgire,MosicmWGapp} and Li et al. \cite{LIringmwginversere} investigated  more properties  and applications for the $m$-weak group inverse in  complex matrix sets and     involutive rings, respectively.

 Recently,   Mosi\'{c} et al. \cite{MosicWmWG} researched   the $W$-weighted $m$-weak group (in short, $W$-$m$-WG) inverse, which  is a generalization of the  $W$-weighted weak group inverse,   $W$-weighted Drazin inverse and  $m$-weak group inverse. Furthermore,
Mosi\'{c} et al.  \cite{MosicWmWGapp}   considered   applications  of the $W$-$m$-WG inverse in solving a constrained minimization problem, which extend the main results in {\cite{MosicmWGapp,wangwgappapprore}.

Motivated by the work of Mosi\'{c} et al. for  the $W$-$m$-WG inverse,
 our main  objective in this paper   is  to show more interesting  properties and representations of the $W$-$m$-WG inverse. Our  contributions  are briefly listed  below.
  \begin{enumerate}[$(1)$]
   \item   We give two projectors related to the $W$-$m$-WG inverse (see Theorem \ref{wmwgproth}), by which an interesting  property   for the $W$-$m$-WG inverse is deduced
        (see Theorem \ref{WAWAWmWeqWaWmWAeq}).

   \item  Various representations of the $W$-$m$-WG inverse  are proposed by using different generalized inverses, such as  the $W$-weighted Drazin inverse,    $W$-weighted weak group inverse, $W$-weighted core inverse, etc. (see Theorems \ref{represwmwgi01th} and \ref{corechawmwgth}).  Moreover, we    express the $W$-$m$-WG inverse through  three formulas  with concise and neat forms (see Theorem \ref{conciseforth}).

   \item Finally,  a canonical form of the $W$-$m$-WG inverse   is given by the singular value decomposition (see Theorem \ref{WmWGinHSdecth}).
 \end{enumerate}

The section arrangement of this paper  is as follows.   We review some notations, definitions and lemmas in Section \ref{Prelsection}. We devote   Section \ref{resultsection} to providing some  properties and representations of the $W$-$m$-WG inverse. Several numerical examples in Section \ref{numbersection} are shown to test   most of  the results in Section \ref{resultsection}.
Conclusions  are  stated in Section \ref{Conclusionssection}.

\section{Preliminaries}\label{Prelsection}
We recall some necessary  notations, definitions and lemmas.
Let $\mathbb{C}^{q\times n}$  and $\mathbb{Z}^{+}$ be the sets of all $q\times n$ complex matrices   and  all positive integers, respectively.
 For $A\in\mathbb{C}^{q\times n}$, the symbols $A^*$, ${\rm rank}(A)$,  ${\rm Ind}(A)$ (for $q=n$), $\mathcal{R}(A)$, $\mathcal{N}(A)$ and $\Vert A \Vert_F$ denote the conjugate transpose,  rank, index, {range space}, null space  and Frobenius  norm of $A$, respectively.
 For $A\in\mathbb{C}^{n\times n}$, we  define   $A^{0}=I_n$, where  $I_n$ is the identity matrix  of order $n$. And, $0$ denotes  the    null matrix with  an appropriate size.
For two subspaces $\mathcal{T}$ and  $\mathcal{S}$   satisfying their direct sum is a $n$-dimensional complex vector space $\mathbb{C}^{n}$, the
projector onto  $\mathcal{T}$   along  $\mathcal{S}$ is denoted by  $P_{\mathcal{T},\mathcal{S}}$.
And,    $P_{\mathcal{T}}=P_{\mathcal{T},\mathcal{T}^{\perp}}$, where $\mathcal{T}^{\perp}$ denotes the orthogonal complement of $\mathcal{T}$.
For $A\in\mathbb{C}^{q\times n}$ and a subspace  $\mathcal{W}$ of $\mathbb{C}^{n }$,
we define   $A\mathcal{W}=\{Ax ~|~x\in\mathcal{W}\} \subseteq \mathbb{C}^{q}$.

Let $A\in\mathbb{C}^{q\times n}$, $B\in\mathbb{C}^{q\times n}$ and $W(\neq 0)\in\mathbb{C}^{n\times q}$. We define the $W$-product of $A$ and $B$   by $  A\star B= A W B$ (see \cite{wcoreepiFerre}). It is well known that if  $\Vert A\Vert_W:=\Vert A\Vert \Vert W\Vert $, where $\Vert \cdot  \Vert $ is  a  matrix norm on $\mathbb{C}^{q\times n}$, then
 $\left(  \mathbb{C}^{q\times n}, \star,  \Vert \cdot \Vert_W\right)$ is a Banach algebra.
Moreover,  we define the $W$-product of $A$ with itself $l\in\mathbb{Z}^{+}$ times by $A^{\star l}$, i.e.,
\begin{equation*}
A^{\star l}=  \underbrace {A \star A\star... \star A}_{l \text{ times}} .
\end{equation*}
 Specially, we define $A^{\star 0} W=I_{q}$ and $WA^{\star 0}  =I_{n}$.

After that, we introduce some generalized inverses.
Let $A\in\mathbb{C}^{q\times n}$. Then  the Moore-Penrose inverse of $A$ (see \cite{PenroseMpire}), denoted by $A^{\dag}$, is the unique matrix $X\in\mathbb{C}^{n\times q}$ satisfying
\begin{equation*}
 AXA=A,~ XAX=X,~(AX)^*=AX,~(XA)^*=XA.
\end{equation*}
Let $A\{2\}=\{ X\in\mathbb{C}^{n\times q} ~|~ XAX=X\}$.
If  $X\in A\{2\} $ satisfies $\mathcal{R}(X)=\mathcal{T}$ and $\mathcal{N}(X)=\mathcal{S}$ for two subspaces $\mathcal{T}\subseteq\mathbb{C}^{n}$ and $\mathcal{S}\subseteq\mathbb{C}^{q}$,  then $X$ is called the outer inverse with the prescribed {range space} $\mathcal{T}$ and   null space $\mathcal{S}$, denoted by $A^{(2)}_{\mathcal{T},\mathcal{S}}$ (see \cite{Wangbookre}).

\begin{definition}\label{vardenfth}
Let $A\in\mathbb{C}^{q\times n}$, $W(\neq 0)\in\mathbb{C}^{n\times q}$ and $k=\max\{{\rm Ind}(AW),{\rm Ind}(WA)\}$.
\begin{enumerate}[$(1)$]
  \item
The $W$-weighted Drazin inverse of $A$ (see \cite{wdrazinclinere}), denoted by $A^{D,W}$, is the unique matrix $X$ satisfying
\begin{equation*}
(AW)^k=(AW)^{k+1}XW,~X=XWAWX,~AWX=XWA.
\end{equation*}
If $W=I_n$, then $A^{D,W}=A^{D}$, which is the Drazin inverse of $A$ (see \cite{Drazininversere}).
If $k \leq 1$, then $A^{D,W}=A^{\#,W}$, which is the $W$-weighted group inverse.
If $W=I_n$ and $k \leq 1$, then  $A^{D,W}=A^{\#}$, which is the group inverse of $A$ (see \cite{Erdegroupinversere}).
\item The weighted core-EP inverse
of $A$ (see \cite{wcoreepiFerre}), denoted by $A^{\textcircled{\dag},W}$, is the unique matrix satisfying
\begin{equation*}
 WAWX=P_{\mathcal{R}((WA)^k)} ,~ \mathcal{R}(X) \subseteq  \mathcal{R}((AW)^k).
\end{equation*}
If $W=I_n$, then $A^{\textcircled{\dag},W}=A^{\textcircled{\dag}}$, which is the core-EP inverse of $A$ (see \cite{PrasadacoreEPinversere}). If $k \leq 1$, then $A^{\textcircled{\dag},W}=A^{\textcircled{\#},W}$, which is the weighted  core inverse of $A$ (see \cite{wweakgroupFerre}). If $W=I_n$ and $k \leq 1$, then $A^{\textcircled{\dag},W}=A^{\textcircled{\#}}$, which is the core inverse of $A$ (see \cite{Bakscoreinversere}).
\item The  $W$-weighted weak group   inverse
of $A$ (see \cite{wweakgroupFerre}), denoted by $A^{\textcircled{w},W}$, is the unique matrix satisfying
\begin{equation*}
 AWX^{\star 2}=X,~AWX=A^{\textcircled{\dag},W}WA.
\end{equation*}
If $W=I_n$, then $A^{\textcircled{w},W}=A^{\textcircled{w}}$, which is the weak group inverse of $A$ (see \cite{Wangwaekgropinverre}).

\item\label{vardenfitem04} Let $m\in\mathbb{Z}^{+}$. The $W$-weighted $m$-weak group (in short, $W$-$m$-WG ) inverse (see \cite{MosicWmWG}), denoted by $A^{\textcircled{w}_m,W}$, is defined as
\begin{equation}\label{Wmwgmpmaineq}
  A^{\textcircled{w}_m,W}=(A^{\textcircled{\dag},W})^{\star (m+1)}WA^{\star m}.
\end{equation}
If $W=I_n$, then $A^{\textcircled{w}_m,W}=A^{\textcircled{w}_m}$, which is $m$-weak group inverse of $A$ (see \cite{wmweakgroupJiare}). If  $W=I_n$ and  $m=2$, then $A^{\textcircled{w}_m,W}=A^{\textcircled{w}_2}$, which is the generalized group inverse of $A$ (see \cite{GGinverseFerryre}). Particularly, if $m=1$, then $  A^{\textcircled{w}_m,W}=  A^{\textcircled{w},W}$, and if $k\leq m$, then $  A^{\textcircled{w}_m,W}=  A^{D,W}$ (see \cite{MosicWmWG}).
\end{enumerate}
\end{definition}

\begin{lemma}\label{wcoreepprole}
Let $A\in\mathbb{C}^{q\times n}$, $W(\neq 0)\in\mathbb{C}^{n\times q}$ and $k=\max\{{\rm Ind}(AW),{\rm Ind}(WA)\}$.  Then:
\begin{enumerate}[$(1)$]
  \item\label{wcoreepimaitem1}\cite[Theorems 2.2 and 2.3]{wcoreepigaore}
  $A^{\textcircled{\dag},W}=A((WA)^{\textcircled{\dag}})^2$;
    \item\label{wcoreepimaitem3} $A^{\textcircled{\dag},W}WAW=P_{\mathcal{R}((AW)^k),
        \mathcal{N}\left(((WA)^k)^{*}WAW\right)}$;
    \item\label{wcoreepimaitem7}
    $(A^{\textcircled{\dag},W})^{\star s}=(A^{\star s})^{\textcircled{\dag},W}$, where $s\in\mathbb{Z}^{+}$.
\end{enumerate}
\end{lemma}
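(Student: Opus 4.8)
The plan is to quote part (1) from \cite{wcoreepigaore} and to deduce parts (2) and (3) from it. Throughout write $M:=WA\in\mathbb{C}^{n\times n}$, so that, by part (1), $X:=A^{\textcircled{\dag},W}=A(M^{\textcircled{\dag}})^{2}$, and recall that ${\rm Ind}(M)\le k$ and ${\rm rank}((AW)^{k})={\rm rank}((WA)^{k})={\rm rank}(M^{k})$. The whole argument rests on four elementary consequences of the defining relations $MM^{\textcircled{\dag}}=P_{\mathcal{R}(M^{k})}$, $M^{\textcircled{\dag}}MM^{\textcircled{\dag}}=M^{\textcircled{\dag}}$ and $\mathcal{R}(M^{\textcircled{\dag}})\subseteq\mathcal{R}(M^{k})$: (i) $P_{\mathcal{R}(M^{k})}M^{\textcircled{\dag}}=M^{\textcircled{\dag}}$; (ii) $M^{\textcircled{\dag}}P_{\mathcal{N}((M^{k})^{*})}=0$; (iii) $M(M^{\textcircled{\dag}})^{2}=M^{\textcircled{\dag}}$; and (iv) $\mathcal{N}(M^{\textcircled{\dag}})=\mathcal{N}((M^{k})^{*})$. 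Here (iv) is the only one requiring an idea: since $MM^{\textcircled{\dag}}=P_{\mathcal{R}(M^{k})}$ one has ${\rm rank}(M^{\textcircled{\dag}})={\rm rank}(M^{k})$ and $\mathcal{N}(M^{\textcircled{\dag}})\subseteq\mathcal{N}(MM^{\textcircled{\dag}})=\mathcal{N}((M^{k})^{*})$, and equality then follows by comparing dimensions.

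For part (2) I would set $Y:=XWAW=A(M^{\textcircled{\dag}})^{2}MW$ and check the three defining properties of the stated projector. Idempotency: from $WAWX=P_{\mathcal{R}(M^{k})}$ we get $Y^{2}=X(WAWX)WAW=XP_{\mathcal{R}(M^{k})}WAW$, so $Y^{2}-Y=-XP_{\mathcal{N}((M^{k})^{*})}MW$, which vanishes because $XP_{\mathcal{N}((M^{k})^{*})}=A(M^{\textcircled{\dag}})^{2}P_{\mathcal{N}((M^{k})^{*})}=0$ by (ii). Range: by (ii), $XWAWX=XP_{\mathcal{R}(M^{k})}=X$, hence $YX=X$ and $\mathcal{R}(X)\subseteq\mathcal{R}(Y)\subseteq\mathcal{R}(X)$; combined with $\mathcal{R}(X)\subseteq\mathcal{R}((AW)^{k})$ and ${\rm rank}(X)\ge{\rm rank}(WAWX)={\rm rank}(M^{k})={\rm rank}((AW)^{k})$, this yields $\mathcal{R}(Y)=\mathcal{R}((AW)^{k})$. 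Null space: if $(M^{k})^{*}MWv=0$ then $MWv\in\mathcal{N}((M^{k})^{*})$, so $(M^{\textcircled{\dag}})^{2}MWv=0$ by (ii) and $Yv=0$; conversely $Yv=0$ gives, after left-multiplication by $W$ and (iii), $M^{\textcircled{\dag}}MWv=M(M^{\textcircled{\dag}})^{2}MWv=0$, whence $(M^{k})^{*}MWv=0$ by (iv).

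For part (3) I would show both sides equal $A(M^{\textcircled{\dag}})^{s+1}$. On the left, (iii) gives $WA^{\textcircled{\dag},W}=WA(M^{\textcircled{\dag}})^{2}=M(M^{\textcircled{\dag}})^{2}=M^{\textcircled{\dag}}$, so $(A^{\textcircled{\dag},W})^{\star s}=A^{\textcircled{\dag},W}(WA^{\textcircled{\dag},W})^{s-1}=A(M^{\textcircled{\dag}})^{2}(M^{\textcircled{\dag}})^{s-1}=A(M^{\textcircled{\dag}})^{s+1}$. On the right, $WA^{\star s}=(WA)^{s}=M^{s}$ gives, by part (1) applied to $A^{\star s}=AM^{s-1}$, $(A^{\star s})^{\textcircled{\dag},W}=A^{\star s}\big((M^{s})^{\textcircled{\dag}}\big)^{2}=AM^{s-1}\big((M^{s})^{\textcircled{\dag}}\big)^{2}$; invoking the core-EP power rule $(M^{s})^{\textcircled{\dag}}=(M^{\textcircled{\dag}})^{s}$ and the identity $M^{s-1}(M^{\textcircled{\dag}})^{2s}=(M^{\textcircled{\dag}})^{s+1}$---both transparent from the core-EP decomposition $M=U\left[\begin{smallmatrix}T&S\\0&N\end{smallmatrix}\right]U^{*}$, $M^{\textcircled{\dag}}=U\left[\begin{smallmatrix}T^{-1}&0\\0&0\end{smallmatrix}\right]U^{*}$---reduces the right-hand side to $A(M^{\textcircled{\dag}})^{s+1}$ as well.

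The only genuine obstacle is the reverse inclusion in the null-space computation of part (2): the forward inclusion is purely formal, but the reverse one hinges on (iv), i.e.\ on the fact that $\mathcal{N}(M^{\textcircled{\dag}})$ equals---rather than merely contains---$\mathcal{N}((M^{k})^{*})$, which in turn is exactly where the orthogonality (Hermitian idempotency) of $MM^{\textcircled{\dag}}=P_{\mathcal{R}(M^{k})}$ is used. Everything else is bookkeeping, the main point of care being to route the weight $W$ through the correct factors, since $A^{\textcircled{\dag},W}\in\mathbb{C}^{q\times n}$, $WAW=MW\in\mathbb{C}^{n\times q}$ and the projector in part (2) acts on $\mathbb{C}^{q}$.
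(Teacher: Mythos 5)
Your proposal is correct. For part (3) it is essentially the paper's own argument: both reduce the two sides to $A((WA)^{\textcircled{\dag}})^{s+1}$, using $WA^{\textcircled{\dag},W}=(WA)^{\textcircled{\dag}}$, part (1) applied to $A^{\star s}$, and the power rule $((WA)^{s})^{\textcircled{\dag}}=((WA)^{\textcircled{\dag}})^{s}$ --- the only cosmetic difference being that the paper cites this rule from \cite{pseudocorre}, while you read it (and the absorption identity $(WA)^{s-1}((WA)^{\textcircled{\dag}})^{2s}=((WA)^{\textcircled{\dag}})^{s+1}$) off the core-EP decomposition. For part (2) the skeleton is also the same --- exhibit $A^{\textcircled{\dag},W}WAW$ as an idempotent with range $\mathcal{R}((AW)^{k})$ and null space $\mathcal{N}(((WA)^{k})^{*}WAW)$ --- but the supporting facts enter differently. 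The paper imports three properties of the weighted core-EP inverse from \cite{wcoreepigaore}, namely $\mathcal{R}(A^{\textcircled{\dag},W})=\mathcal{R}((AW)^{k})$, $\mathcal{N}(A^{\textcircled{\dag},W})=\mathcal{N}(((WA)^{k})^{*})$ and $A^{\textcircled{\dag},W}\in(WAW)\{2\}$, and glues them together with a one-line null-space transfer observation ($\mathcal{N}(B)=\mathcal{N}(D)$ implies $\mathcal{N}(BC)=\mathcal{N}(DC)$). You instead manufacture all of this from part (1) and the defining relations of the unweighted core-EP inverse of $M=WA$: your items (i)--(iv), the rank count behind (iv), and the two-inclusion null-space computation. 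What your route buys is self-containedness --- beyond the representation in part (1), nothing about $A^{\textcircled{\dag},W}$ is quoted, and in effect you re-prove the three cited facts of \cite{wcoreepigaore} in exactly the form needed; what it costs is length. The two small claims you state without proof --- ${\rm rank}((AW)^{k})={\rm rank}((WA)^{k})$ and $\mathcal{R}(A((WA)^{\textcircled{\dag}})^{2})\subseteq\mathcal{R}((AW)^{k})$ --- are standard and easy (the paper itself proves the former at the start of the proof of Theorem \ref{corechawmwgth}), so they do not constitute gaps.
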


\begin{proof}
\eqref{wcoreepimaitem3}
Note that if $B\in\mathbb{C}^{q\times n}$, $C\in\mathbb{C}^{n\times g}$, $D\in\mathbb{C}^{p\times n}$ and $\mathcal{N}(B)=\mathcal{N}(D)$, then
\begin{equation*}
    \mathcal{N}(BC)= (C^*\mathcal{N}^{\perp}(B))^{\perp}=
    (C^*\mathcal{N}^{\perp}(D))^{\perp}  =
   \mathcal{N}(DC).
\end{equation*}
So, it follows from \cite[Proposition 3.1(2)]{wcoreepigaore}, i.e., $\mathcal{N}(A^{\textcircled{\dag},W})=\mathcal{N}\left(((WA)^k)^*\right)$, that
 \begin{equation*}
   \mathcal{N}(A^{\textcircled{\dag},W}WAW)=\mathcal{N}(((WA)^k)^*WAW).
 \end{equation*}
Thus, using \cite[Proposition 3.1(1)]{wcoreepigaore}, i.e., $\mathcal{R}(A^{\textcircled{\dag},W})=\mathcal{R}((AW)^k)$, and  \cite[Theorem 2.6]{wcoreepigaore}, i.e.,
$A^{\textcircled{\dag},W} \in {(WAW)\{2\}}$,
{we have}
 \begin{equation*}
   A^{\textcircled{\dag},W}WAW=P_{\mathcal{R}((AW)^k),
        \mathcal{N}\left(((WA)^k)^{*}WAW\right)}.
 \end{equation*}
 \par
\eqref{wcoreepimaitem7}
According to Lemma \ref{wcoreepprole}\eqref{wcoreepimaitem1} and the fact that
\begin{equation}\label{WAcoreEPWWAcEpeq}
 WA((WA)^{\textcircled{\dag}})^2=(WA)^{\textcircled{\dag}},
\end{equation}
it follows    that
  \begin{equation}\label{WAWcoreepeqwacoreeqeq}
   WA^{\textcircled{\dag},W}=(WA)^{\textcircled{\dag}}
  \end{equation}
   and
 \begin{equation*}
{(A^{\textcircled{\dag},W})^{\star s}=
 A^{\textcircled{\dag},W}((WA)^{\textcircled{\dag}})^{s-1}=
 A((WA)^{\textcircled{\dag}})^{s+1}.}
 \end{equation*}
 Using
 \cite[Theorem 2.6]{pseudocorre}, i.e.,
 \begin{equation}\label{AscoreepeqAcoreseq}
  {((WA)^s)^{\textcircled{\dag}}=((WA)^{\textcircled{\dag}})^s,}
 \end{equation}
 by Lemma \ref{wcoreepprole}\eqref{wcoreepimaitem1} and \eqref{WAcoreEPWWAcEpeq},
    {we get}
 \begin{equation*}
 {  (A^{\star s})^{\textcircled{\dag},W}
   = A^{\star s}((WA^{\star s})^{\textcircled{\dag}})^2
 =A(WA)^{s-1}((WA)^{\textcircled{\dag}})^{ s}((WA)^{\textcircled{\dag}})^{ s}
 =  A((WA)^{\textcircled{\dag}})^{s+1}.}
  \end{equation*}
Hence, $(A^{\textcircled{\dag},W})^{\star s}=(A^{\star s})^{\textcircled{\dag},W}$.
\end{proof}

\section{Results}\label{resultsection}
In this section,  some novel properties and representations of the $W$-$m$-WG inverse are proposed.   Mosi\'{c} et al. in \cite[Lemma 2.2(ii), (iii)]{MosicWmWG} demonstrated two projectors correlated with the $W$-$m$-WG inverse, i.e.,
 \begin{equation}\label{wmwgproAXit01}
    WAWA^{\textcircled{w}_m,W}=P_{\mathcal{R}\left((WA)^k\right),
       \mathcal{N}\left(((WA)^k)^*(WA)^m\right)},
 \end{equation}
 \begin{equation*}
   A^{\textcircled{w}_m,W}WAW=P_{\mathcal{R}\left((AW)^k\right),
       \mathcal{N}\left(((WA)^k)^*(WA)^{m+1}W\right)},
 \end{equation*}
  where  $A\in\mathbb{C}^{q\times n}$, $W(\neq 0)\in\mathbb{C}^{n\times q}$, $m\in\mathbb{Z}^{+}$ and $k=\max\{{\rm Ind}(AW),{\rm Ind}(WA)\}$.
Inspired by the above result, we start this section by giving   two other  projectors with respect to  the $W$-$m$-WG inverse.

\begin{theorem}\label{wmwgproth}
Let $A\in\mathbb{C}^{q\times n}$, $W(\neq 0)\in\mathbb{C}^{n\times q}$, $m\in\mathbb{Z}^{+}$ and $k=\max\{{\rm Ind}(AW),{\rm Ind}(WA)\}$. Then:
 \begin{enumerate}[$(1)$]
   \item\label{wmwgproXAit03}
   $AWA^{\textcircled{w}_m,W}W=
      P_{\mathcal{R}\left((AW)^{k}\right),
       \mathcal{N}\left((WA)^{k})^*W(AW)^{ m}\right)}$;
   \item\label{wmwgproXAit04} $WA^{\textcircled{w}_m,W}WA=
   P_{\mathcal{R}\left( (WA)^{k}\right),
       \mathcal{N}\left( ((WA)^{k})^*(WA)^{m+1} \right)}$.
 \end{enumerate}
\end{theorem}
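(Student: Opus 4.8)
The plan is to verify, for each of the two identities $M=P_{\mathcal{T},\mathcal{S}}$, the three defining properties of an oblique projector: $M^{2}=M$, $\mathcal{R}(M)=\mathcal{T}$ and $\mathcal{N}(M)=\mathcal{S}$. The crucial preliminary step is to collapse the weighted products into powers of $T:=WA$ and its core-EP inverse. Writing $C=A^{\textcircled{\dag},W}$, the $\star$-bookkeeping gives $(A^{\textcircled{\dag},W})^{\star(m+1)}=C(WC)^{m}$ and $WA^{\star m}=(WA)^{m}=T^{m}$; combining $WC=(WA)^{\textcircled{\dag}}=T^{\textcircled{\dag}}$ (from the proof of Lemma \ref{wcoreepprole}) with Lemma \ref{wcoreepprole}\eqref{wcoreepimaitem1} (so $C=A(T^{\textcircled{\dag}})^{2}$) and the absorption $T(T^{\textcircled{\dag}})^{m+2}=(T^{\textcircled{\dag}})^{m+1}$ yields $A^{\textcircled{w}_m,W}=A(T^{\textcircled{\dag}})^{m+2}T^{m}$, whence
\[
WA^{\textcircled{w}_m,W}WA=(T^{\textcircled{\dag}})^{m+1}T^{m+1},\qquad
AWA^{\textcircled{w}_m,W}W=A(T^{\textcircled{\dag}})^{m+1}T^{m}W .
\]
These two forms are what I would actually analyse.

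For idempotency I would use the single outer-inverse relation $(T^{\textcircled{\dag}})^{m+1}T^{m+1}(T^{\textcircled{\dag}})^{m+1}=(T^{\textcircled{\dag}})^{m+1}$, which is the defining $\{2\}$-property of the core-EP inverse of $T^{m+1}$ together with $(T^{\textcircled{\dag}})^{m+1}=(T^{m+1})^{\textcircled{\dag}}$. For \eqref{wmwgproXAit04} this is immediate; for \eqref{wmwgproXAit03} one inserts $WA=T$ at the junction of $R^{2}$ and applies the same relation. So idempotency is not the difficult point. For the null spaces I would lean on the transfer lemma recorded at the start of the proof of Lemma \ref{wcoreepprole} (if $\mathcal{N}(B)=\mathcal{N}(D)$ then $\mathcal{N}(BC)=\mathcal{N}(DC)$) together with the core-EP identity $\mathcal{N}((T^{\textcircled{\dag}})^{m+1})=\mathcal{N}(((WA)^{k})^{*})$. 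For \eqref{wmwgproXAit04}, the transfer lemma with right factor $T^{m+1}$ gives $\mathcal{N}\big((T^{\textcircled{\dag}})^{m+1}T^{m+1}\big)=\mathcal{N}(((WA)^{k})^{*}(WA)^{m+1})$, as required.

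For \eqref{wmwgproXAit03} I would first strip the leading $A$: since $\mathcal{R}\big((T^{\textcircled{\dag}})^{m+1}T^{m}W\big)\subseteq\mathcal{R}(T^{\textcircled{\dag}})=\mathcal{R}((WA)^{k})$ and $A$ is injective on $\mathcal{R}((WA)^{k})$ (if $v\in\mathcal{R}((WA)^{k})$ and $Av=0$ then $Tv=0$, hence $v=0$ because $T$ is invertible there), one gets $\mathcal{N}(R)=\mathcal{N}\big((T^{\textcircled{\dag}})^{m+1}T^{m}W\big)$; the transfer lemma with right factor $T^{m}W$ and the rewriting $T^{m}W=W(AW)^{m}$ then deliver $\mathcal{N}(((WA)^{k})^{*}W(AW)^{m})$.

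The ranges are where I expect the real work, especially in \eqref{wmwgproXAit03}. The inclusion $\mathcal{R}(M)\subseteq\mathcal{T}$ is cheap, from $\mathcal{R}(T^{\textcircled{\dag}})=\mathcal{R}((WA)^{k})$ and $A\,\mathcal{R}((WA)^{k})\subseteq\mathcal{R}((AW)^{k})$. For the reverse inclusion I would recycle the companion projectors of Mosi\'{c} et al.\ (namely \eqref{wmwgproAXit01} and $A^{\textcircled{w}_m,W}WAW=P_{\mathcal{R}((AW)^{k}),\,\cdot}$): from the factorisation $AWA^{\textcircled{w}_m,W}W\cdot AW=AW\cdot(A^{\textcircled{w}_m,W}WAW)$ and the surjectivity of $AW$ on $\mathcal{R}((AW)^{k})$, every $z\in\mathcal{R}((AW)^{k})$ can be written $z=(AW)w$ with $w\in\mathcal{R}((AW)^{k})$, and the companion projector fixes $w$, so $Mz=z$. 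The main obstacle is precisely this two-sided bookkeeping in \eqref{wmwgproXAit03}: the product $AWA^{\textcircled{w}_m,W}W$ genuinely mixes $AW$ and $WA$, so the fixing argument must correctly thread the intertwiners $W(AW)^{j}=(WA)^{j}W$ and the range-fixing property of the companion projector; by contrast \eqref{wmwgproXAit04} lives entirely inside $T=WA$ and is comparatively routine.
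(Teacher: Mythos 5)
Your proposal is correct, but it takes a genuinely different route from the paper. The paper's proof is essentially a substitution argument: using Mosi\'{c} et al.'s identity $AWA^{\textcircled{w}_m,W}=(A^{\textcircled{\dag},W})^{\star m}WA^{\star m}$ together with Lemma \ref{wcoreepprole}\eqref{wcoreepimaitem7}, it recognizes $AWA^{\textcircled{w}_m,W}W$ as $(A^{\star m})^{\textcircled{\dag},W}WA^{\star m}W$, i.e.\ the projector of Lemma \ref{wcoreepprole}\eqref{wcoreepimaitem3} applied to the matrix $A^{\star m}$ with weight $W$; similarly $WA^{\textcircled{w}_m,W}WA=((WA)^{m+1})^{\textcircled{\dag}}(WA)^{m+1}$ is that same lemma applied to $(WA)^{m+1}$ with weight $I_n$. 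All projector properties are then inherited wholesale, and the only remaining work is identifying $\mathcal{R}\left(((AW)^m)^l\right)=\mathcal{R}\left((AW)^k\right)$ and the corresponding null spaces via index inequalities. You instead reduce everything to $T=WA$ (your reduction $A^{\textcircled{w}_m,W}=A(T^{\textcircled{\dag}})^{m+2}T^{m}$, hence $WA^{\textcircled{w}_m,W}WA=(T^{\textcircled{\dag}})^{m+1}T^{m+1}$ and $AWA^{\textcircled{w}_m,W}W=A(T^{\textcircled{\dag}})^{m+1}T^{m}W$, is exactly right and coincides with the algebra the paper uses elsewhere, e.g.\ in Theorem \ref{represwmwgi01th}\eqref{represwmwgi01item05} and in part (2) here) and then verify the three projector axioms from first principles: idempotency from the outer-inverse property of $(T^{m+1})^{\textcircled{\dag}}$, null spaces from the transfer lemma plus $\mathcal{N}\left((T^{\textcircled{\dag}})^{m+1}\right)=\mathcal{N}\left(((WA)^k)^*\right)$, and ranges from an easy inclusion plus a fixing argument. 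Your extra ingredients all check out: $A$ is indeed injective on $\mathcal{R}\left((WA)^k\right)$ (so the leading $A$ can be stripped when computing $\mathcal{N}$), $AW$ is surjective on $\mathcal{R}\left((AW)^k\right)$, and the intertwining $AWA^{\textcircled{w}_m,W}W\cdot AW=AW\cdot A^{\textcircled{w}_m,W}WAW$ combined with the companion projector $A^{\textcircled{w}_m,W}WAW=P_{\mathcal{R}((AW)^k),\,\cdot}$ from \cite[Lemma 2.2(iii)]{MosicWmWG} correctly yields $\mathcal{R}\left((AW)^k\right)\subseteq\mathcal{R}\left(AWA^{\textcircled{w}_m,W}W\right)$. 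The trade-off: the paper's route is shorter because one lemma does all the work and no axiom needs rechecking, but it leans on Mosi\'{c} et al.'s Theorem 2.1; your route avoids that theorem and makes the projector structure completely explicit, at the cost of the two-sided bookkeeping you flagged in part (1) and of still invoking the companion projectors of \cite{MosicWmWG} for the range argument.
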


\begin{proof}
\eqref{wmwgproXAit03}
Since ${\rm Ind}(A^{\star m}W)  \leq {\rm Ind}(AW) $ and ${\rm Ind}(WA^{\star m}) \leq {\rm Ind}(WA)$, {we derive}
\begin{equation*}
  l:={\rm max}\{ {\rm Ind}(A^{\star m}W) ,{\rm Ind}(WA^{\star m}) \}
   \leq  k=\max\{{\rm Ind}(AW),{\rm Ind}(WA)\}.
\end{equation*}
Then, using  \cite[Theorem 2.1]{MosicWmWG}, i.e.,
\begin{equation*}
AWA^{\textcircled{w}_m,W}
    = (A^{\textcircled{\dag},W})^{\star m}WA^{\star m},
\end{equation*}
by items \eqref{wcoreepimaitem7} and \eqref{wcoreepimaitem3} in Lemma \ref{wcoreepprole},  we have
\begin{equation*}{ \begin{aligned}
    AWA^{\textcircled{w}_m,W}W&
    = (A^{\textcircled{\dag},W})^{\star m}WA^{\star m}W = (A^{\star m})^{\textcircled{\dag},W}WA^{\star m}W  \\
   &=
   P_{\mathcal{R}((A^{\star m} W)^l),
        \mathcal{N}\left(((WA^{\star m})^l)^{*}WA^{\star m}W\right)}=
       P_{\mathcal{R}\left((AW)^{k}\right),
       \mathcal{N}\left(((WA)^{k})^*W(AW)^{ m}\right)}.
\end{aligned}} \end{equation*}
\par
\eqref{wmwgproXAit04}
Using the fact that
\begin{equation*}
  {\rm Ind}((WA)^{  (m+1)})\leq {\rm Ind}(WA) \leq
k=\max\{{\rm Ind}(AW),{\rm Ind}(WA)\},
\end{equation*}
  by \eqref{Wmwgmpmaineq}, \eqref{WAWcoreepeqwacoreeqeq},  \eqref{AscoreepeqAcoreseq} and Lemma \ref{wcoreepprole}\eqref{wcoreepimaitem3} as $W=I_n$,   {we have}
 \begin{align*}
WA^{\textcircled{w}_m,W}WA&=
W(A^{\textcircled{\dag},W})^{\star (m+1)}WA^{\star m}WA
=((WA)^{\textcircled{\dag}})^{  (m+1)}(WA)^{m+1}\\
&
= ((WA)^{(m+1)})^{\textcircled{\dag}}(WA)^{m+1}
 =   P_{\mathcal{R}\left( (WA)^{k}\right),
       \mathcal{N}\left( ((WA)^{k})^*(WA)^{m+1} \right)}.
 \end{align*}
This finishes the proof.
\end{proof}

As stated in Definition \ref{vardenfth}\eqref{vardenfitem04},  the $W$-$m$-WG inverse reduces  to  the $W$-WG inverse when $m=1$. However, in the other case $m>1$,  there is an interesting relation between two projectors formed by the $W$-$m$-WG inverse in terms of Theorem \ref{wmwgproth}\eqref{wmwgproXAit04} and \cite[Lemma 2.2(ii)]{MosicWmWG}.

\begin{theorem}\label{WAWAWmWeqWaWmWAeq}
Let $A\in\mathbb{C}^{q\times n}$, $W(\neq 0)\in\mathbb{C}^{n\times q}$, $m\in\mathbb{Z}^{+}$ and $m>1$.
Then,
\begin{equation*}
  WAWA^{\textcircled{w}_m,W}=WA^{\textcircled{w}_{m-1},W}WA.
\end{equation*}
\end{theorem}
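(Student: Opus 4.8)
The plan is to show that both sides of the asserted identity are literally the \emph{same} oblique projector, so the equality reduces to matching a range space and a null space on each side. This is precisely the ``interesting relation between two projectors'' announced in the paragraph preceding the statement, linking Theorem \ref{wmwgproth}\eqref{wmwgproXAit04} with \cite[Lemma 2.2(ii)]{MosicWmWG}.

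First I would dispose of the left-hand side with no computation at all: by \cite[Lemma 2.2(ii)]{MosicWmWG}, recorded in the excerpt as \eqref{wmwgproAXit01}, we already know
\[
WAWA^{\textcircled{w}_m,W}=P_{\mathcal{R}((WA)^k),\,\mathcal{N}(((WA)^k)^*(WA)^m)} .
\]
Thus the left-hand side is the projector onto $\mathcal{R}((WA)^k)$ along $\mathcal{N}(((WA)^k)^*(WA)^m)$, and nothing more is needed here.

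Next I would treat the right-hand side by invoking Theorem \ref{wmwgproth}\eqref{wmwgproXAit04}, but with the parameter $m$ replaced throughout by $m-1$. The hypothesis $m>1$ is exactly what guarantees $m-1\in\mathbb{Z}^{+}$, so $A^{\textcircled{w}_{m-1},W}$ is defined and the theorem is applicable. The single point that needs care --- and the only genuine subtlety I anticipate --- is the index bookkeeping: the integer $k=\max\{\mathrm{Ind}(AW),\mathrm{Ind}(WA)\}$ does \emph{not} depend on $m$, so substituting $m-1$ leaves the range space $\mathcal{R}((WA)^k)$ untouched and merely shifts the exponent appearing in the null space. Carrying this out yields
\[
WA^{\textcircled{w}_{m-1},W}WA=P_{\mathcal{R}((WA)^k),\,\mathcal{N}(((WA)^k)^*(WA)^{(m-1)+1})}=P_{\mathcal{R}((WA)^k),\,\mathcal{N}(((WA)^k)^*(WA)^m)} ,
\]
where the key cancellation is simply $(m-1)+1=m$. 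Comparing the two displays, both sides are the identical projector, and the theorem follows.

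As a cross-check that avoids quoting the projector descriptions, I could instead reduce both sides by hand. Writing $B:=WA$ and substituting the defining formula \eqref{Wmwgmpmaineq} into both $A^{\textcircled{w}_m,W}$ and $A^{\textcircled{w}_{m-1},W}$, then converting every $\star$-power and every surrounding $W$ into ordinary products of $B$ and $B^{\textcircled{\dag}}=(WA)^{\textcircled{\dag}}$ via Lemma \ref{wcoreepprole}\eqref{wcoreepimaitem7}, \eqref{WAWcoreepeqwacoreeqeq}, and the elementary fact $A^{\star l}=A(WA)^{l-1}$, one finds that both sides collapse to $B(B^{\textcircled{\dag}})^{m+1}B^m$; the absorption identity \eqref{WAcoreEPWWAcEpeq}, namely $B(B^{\textcircled{\dag}})^2=B^{\textcircled{\dag}}$, together with \eqref{AscoreepeqAcoreseq}, then rewrites this common value as $((WA)^{\textcircled{\dag}})^m(WA)^m$. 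Since the projector argument is shorter and conceptually transparent, I would present it as the main proof and keep this direct computation only as verification.
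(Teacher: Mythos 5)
Your proposal is correct and takes essentially the same route as the paper's own proof: both sides are identified as the same oblique projector, the left-hand side via \eqref{wmwgproAXit01} and the right-hand side via Theorem \ref{wmwgproth}\eqref{wmwgproXAit04} applied with $m-1$ in place of $m$ (legitimate precisely because $m>1$ keeps $m-1\in\mathbb{Z}^{+}$, with $k$ unchanged). The direct computation you sketch as a cross-check is also sound, but, as you say, unnecessary given the projector argument.
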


\begin{proof}
Let $k=\max\{{\rm Ind}(AW),{\rm Ind}(WA)\}$. Since $m>1$, it follows from Theorem \ref{wmwgproth}\eqref{wmwgproXAit04} that
\begin{equation*}
  WA^{\textcircled{w}_{m-1},W}WA=
   P_{\mathcal{R}\left( (WA)^{k}\right),
       \mathcal{N}\left( ((WA)^{k})^*(WA)^{m} \right)}.
\end{equation*}
Then, by  \eqref{wmwgproAXit01}  {we have}
\begin{equation*}
WAWA^{\textcircled{w}_m,W}=P_{\mathcal{R}((WA)^k),\mathcal{N}\left((WA)^k)^*(WA)^{m}\right)}= WA^{\textcircled{w}_{m-1},W}WA,
\end{equation*}
which completes the proof.
\end{proof}

\begin{remark}
Under the hypotheses of Theorem \ref{WAWAWmWeqWaWmWAeq},
if $W=I_n$,  by Theorem \ref{WAWAWmWeqWaWmWAeq} {we have}
\begin{equation*}
  AA^{\textcircled{w}_m}=A^{\textcircled{w}_{m-1}}A,
\end{equation*}
 which extends \cite[Theorem 3.3(d)]{GGinverseFerryre}, i.e., $AA^{\textcircled{w}_2}=A^{\textcircled{w}}A$.
\end{remark}

We then   give some  explicit expressions for the $W$-$m$-WG inverse by  using the $W$-weighted Drazin inverse,   Moore-Penrose inverse,    $W$-weighted weak group inverse, $W$-weighted group inverse,   $m$-weak group inverse,  and    weighted core inverse,  respectively.

 \begin{theorem}\label{represwmwgi01th}
 Let $A\in\mathbb{C}^{q\times n}$, $W(\neq 0)\in\mathbb{C}^{n\times q}$, $m\in\mathbb{Z}^{+}$ and  $k=\max\{{\rm Ind}(AW),{\rm Ind}(WA)\}$. Then:
\begin{enumerate}[$(1)$]
  \item\label{represwmwgi01item01} $A^{\textcircled{w}_m,W}=(A^{D,W})^{\star (m+1)}P_{\mathcal{R}((WA)^k)}WA^{\star m}$;
     \item\label{represwmwgi01item02}
$A^{\textcircled{w}_m,W}=A^{\star l}W(WA^{\star (l+m+1)}W)^{\dag}WA^{\star m}
    $, where  $l\geq  k$;
  \item\label{represwmwgi01item03}  $A^{\textcircled{w}_m,W}=(WA^{\star (m+1)}WP_{\mathcal{R}((AW)^k)})^{\dag}WA^{\star m}$;
  \item\label{represwmwgi01item04} $A^{\textcircled{w}_m,W}=A^{\star (m-1)}W(A^{\star m})^{\textcircled{w},W}$;
  \item\label{represwmwgi01item06}
  $A^{\textcircled{w}_m,W}=(A^{\textcircled{w},W})^{\star m}WA^{\star (m-1)}$;
  \item\label{represwmwgi01item001}
$  A^{\textcircled{w}_m,W} = \left((AW)^mA^{\textcircled{\dag},W}WA\right)^{\#,W}WA^{\star (m-1)}$;
  \item\label{represwmwgi01item05} $A^{\textcircled{w}_m,W}=A((WA)^{\textcircled{w}_m})^2$.
\end{enumerate}

\end{theorem}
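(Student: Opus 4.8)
The plan is to funnel all seven identities through a single \emph{master form}. Writing $B=WA$ and $D=(WA)^{\textcircled{\dag}}$, Lemma \ref{wcoreepprole}\eqref{wcoreepimaitem1} together with \eqref{WAWcoreepeqwacoreeqeq} gives $A^{\textcircled{\dag},W}=AD^{2}$ and $WA^{\textcircled{\dag},W}=D$, so that $(A^{\textcircled{\dag},W})^{\star(m+1)}=A^{\textcircled{\dag},W}(WA^{\textcircled{\dag},W})^{m}=AD^{m+2}$; since $WA^{\star m}=(WA)^{m}=B^{m}$, the defining equation \eqref{Wmwgmpmaineq} collapses to
\begin{equation*}
  A^{\textcircled{w}_m,W}=A\,D^{m+2}B^{m},\qquad D=(WA)^{\textcircled{\dag}},\ B=WA .
\end{equation*}
Alongside this I would record the elementary toolbox that drives every reduction: from the defining relations of the core-EP inverse (Definition \ref{vardenfth}) with weight $I_n$ one has $BD=P_{\mathcal{R}(B^{k})}$ and $\mathcal{R}(D)\subseteq\mathcal{R}(B^{k})$, hence $P_{\mathcal{R}(B^{k})}D=D$, while $D^{j}=(B^{j})^{\textcircled{\dag}}$ by \eqref{AscoreepeqAcoreseq} yields $B^{j}D^{j}=P_{\mathcal{R}(B^{k})}$ and $BD^{j+1}=D^{j}$ for $j\ge1$. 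Every identity will be proven by rewriting its right-hand side as $AD^{m+2}B^{m}$.

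The three items \eqref{represwmwgi01item05}, \eqref{represwmwgi01item04}, \eqref{represwmwgi01item06} are then pure $\star$-algebra. For \eqref{represwmwgi01item05}, expand $(WA)^{\textcircled{w}_m}=D^{m+1}B^{m}$; then $((WA)^{\textcircled{w}_m})^{2}=D^{m+1}(B^{m}D^{m+1})B^{m}=D^{m+1}DB^{m}=D^{m+2}B^{m}$, so $A((WA)^{\textcircled{w}_m})^{2}=AD^{m+2}B^{m}$. For \eqref{represwmwgi01item04}, Lemma \ref{wcoreepprole}\eqref{wcoreepimaitem7} gives $(A^{\star m})^{\textcircled{\dag},W}=(A^{\textcircled{\dag},W})^{\star m}=AD^{m+1}$; feeding this into the $m=1$ instance of \eqref{Wmwgmpmaineq} defining $(A^{\star m})^{\textcircled{w},W}$ and simplifying via $B^{m-1}D^{2m+1}=D^{m+2}$ (using $B^{m-1}D^{m-1}=P_{\mathcal{R}(B^{k})}$) again lands on $AD^{m+2}B^{m}$; \eqref{represwmwgi01item06} is the mirror computation starting from $A^{\textcircled{w},W}=AD^{3}B$ and $(WA^{\textcircled{w},W})=D^{2}B$. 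The boundary case $m=1$ is immediate in each, since $A^{\star 0}W=I_{q}$ and $WA^{\star 0}=I_{n}$.

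For \eqref{represwmwgi01item01} I would first invoke the Drazin analogues of the above, namely $WA^{D,W}=(WA)^{D}$ and $A^{D,W}=A((WA)^{D})^{2}$, so the right-hand side becomes $A((WA)^{D})^{m+2}P_{\mathcal{R}(B^{k})}B^{m}$. The single new fact needed is $((WA)^{D})^{m+2}P_{\mathcal{R}(B^{k})}=D^{m+2}$, which I would read off the core-EP (Schur) form $B=U\left(\begin{smallmatrix}T&\ast\\0&N\end{smallmatrix}\right)U^{*}$ with $T$ invertible and $N$ nilpotent: there $B^{D}$ and $B^{\textcircled{\dag}}$ share the $(1,1)$-block $T^{-1}$ while $B^{\textcircled{\dag}}$ has zero off-diagonal, so right-multiplication by $P_{\mathcal{R}(B^{k})}=U\left(\begin{smallmatrix}I&0\\0&0\end{smallmatrix}\right)U^{*}$ annihilates the discrepancy. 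Item \eqref{represwmwgi01item001} runs the same way through the group inverse: with $G=(AW)^{m}A^{\textcircled{\dag},W}WA$ one computes $WG=B^{m}DB$, and the same Schur form gives $(WG)^{\#}=D^{m+1}B$; then the index-one specialisation $G^{\#,W}=G((WG)^{\#})^{2}$ of $A^{D,W}=A((WA)^{D})^{2}$, together with the toolbox, collapses $G^{\#,W}WA^{\star(m-1)}$ to $AD^{m+2}B^{m}$.

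Items \eqref{represwmwgi01item02} and \eqref{represwmwgi01item03} are the hard part, and I would treat them through the outer-inverse picture. The projector identity \eqref{wmwgproAXit01} together with its companion $A^{\textcircled{w}_m,W}WAW=P_{\mathcal{R}((AW)^{k}),\,\mathcal{N}(((WA)^{k})^{*}(WA)^{m+1}W)}$ and the general fact $\mathcal{N}(X)=\mathcal{N}((WAW)X)$ for $X\in(WAW)\{2\}$ identify
\begin{equation*}
  A^{\textcircled{w}_m,W}=(WAW)^{(2)}_{\mathcal{R}((AW)^{k}),\,\mathcal{N}(((WA)^{k})^{*}(WA)^{m})} .
\end{equation*}
I would then match each right-hand side to a Moore--Penrose representation of this outer inverse: \eqref{represwmwgi01item02} has the shape $Y(ZMY)^{\dag}Z$ with $M=WAW$, $Y=A^{\star l}W=(AW)^{l}$, $Z=WA^{\star m}$ and $Z M Y=WA^{\star(l+m+1)}W$, while \eqref{represwmwgi01item03} has the shape $(ZMP_{\mathcal{R}((AW)^{k})})^{\dag}Z$ with the same $Z$, since $ZMP_{\mathcal{R}((AW)^{k})}=WA^{\star(m+1)}WP_{\mathcal{R}((AW)^{k})}$. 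The obstacle is exactly the bookkeeping that legitimises these formulas: one must confirm $\mathcal{R}(Y)=\mathcal{R}((AW)^{k})$, which is why $l\ge k$ is imposed (it stabilises $\mathcal{R}((AW)^{l})$), and one must confirm that the Moore--Penrose factor reproduces the \emph{prescribed} null space $\mathcal{N}(((WA)^{k})^{*}(WA)^{m})$. This last point is delicate when $m<k$, for then $\mathcal{N}(WA^{\star m})$ is strictly smaller than the target null space, so the extra annihilation must be supplied by $(WA^{\star(l+m+1)}W)^{\dag}$ (respectively by the $P_{\mathcal{R}((AW)^{k})}$-compression) rather than by $Z$ itself. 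Carrying out this range–rank–nullspace verification, most transparently in core-EP/SVD coordinates, is where the real substance of \eqref{represwmwgi01item02} and \eqref{represwmwgi01item03} lies.
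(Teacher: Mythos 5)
Your master form $A^{\textcircled{w}_m,W}=AD^{m+2}B^{m}$ (with $B=WA$, $D=(WA)^{\textcircled{\dag}}$) and the toolbox identities $BD=P_{\mathcal{R}(B^{k})}$, $P_{\mathcal{R}(B^{k})}D=D$, $B^{j}D^{j}=P_{\mathcal{R}(B^{k})}$, $BD^{j+1}=D^{j}$ are all correct, and your arguments for items \eqref{represwmwgi01item01}, \eqref{represwmwgi01item04}, \eqref{represwmwgi01item06}, \eqref{represwmwgi01item001} and \eqref{represwmwgi01item05} do go through. This is a genuinely different organization from the paper's: you do everything inside one core-EP Schur frame (reading $((WA)^{D})^{m+2}P_{\mathcal{R}(B^{k})}=D^{m+2}$ and $(WG)^{\#}=D^{m+1}B$ off block forms), whereas the paper quotes, item by item, known representation theorems --- $A^{\textcircled{\dag},W}=A^{D,W}P_{\mathcal{R}((WA)^{k})}$ for item \eqref{represwmwgi01item01}, $B^{\textcircled{w},W}=(B^{\textcircled{\dag},W})^{\star 2}WB$ for items \eqref{represwmwgi01item04} and \eqref{represwmwgi01item06}, $A^{\textcircled{w},W}=(AWA^{\textcircled{\dag},W}WA)^{\#,W}$ together with $(C^{\#,W})^{\star m}=(C^{\star m})^{\#,W}$ for item \eqref{represwmwgi01item001}, and $B^{\textcircled{w}_m}=(B^{\textcircled{\dag}})^{m+1}B^{m}$ for item \eqref{represwmwgi01item05} --- and then performs the same $\star$-algebra. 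Your route is more self-contained; the only loose end in it is that for item \eqref{represwmwgi01item001} you should also check ${\rm Ind}(GW)\leq 1$ (not only ${\rm Ind}(WG)\leq 1$) so that $G^{\#,W}$ exists and the Cline--Greville formula $G^{\#,W}=G((WG)^{\#})^{2}$ is applicable; this is routine but not free.

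The genuine gap is in items \eqref{represwmwgi01item02} and \eqref{represwmwgi01item03}: you never actually prove them. Your identification $A^{\textcircled{w}_m,W}=(WAW)^{(2)}_{\mathcal{R}((AW)^{k}),\,\mathcal{N}(((WA)^{k})^{*}(WA)^{m})}$ is correct, but the step that would close the argument --- showing that $(AW)^{l}(WA^{\star(l+m+1)}W)^{\dag}WA^{\star m}$ and $(WA^{\star(m+1)}WP_{\mathcal{R}((AW)^{k})})^{\dag}WA^{\star m}$ realize exactly this outer inverse --- is precisely what you defer (``carrying out this range--rank--nullspace verification \dots is where the real substance lies''). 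Moreover this is not a citation away: the Urquhart-type theorem giving $M^{(2)}_{\mathcal{R}(Y),\mathcal{N}(Z)}=Y(ZMY)^{\dag}Z$ requires ${\rm rank}(ZMY)={\rm rank}(Y)={\rm rank}(Z)$, while here ${\rm rank}(ZMY)={\rm rank}((WA)^{k})$ but ${\rm rank}(Z)={\rm rank}((WA)^{m})$, which is strictly larger whenever $m<k$ --- exactly the delicacy you flag --- so the theorem cannot be invoked and the null-space matching must be produced by hand. The paper sidesteps this entirely: for \eqref{represwmwgi01item02} it quotes $A^{\textcircled{\dag},W}=(AW)^{l}(W(AW)^{l+1})^{\dag}$ from \cite[Theorem 2.10]{wcoreepigaore} and shows inductively, using $C^{\dag}C=P_{\mathcal{R}(C^{*})}$ and $\mathcal{N}(W(AW)^{l+1})\subseteq\mathcal{N}((AW)^{l})$, that $(A^{\textcircled{\dag},W})^{\star(m+1)}=(AW)^{l}(W(AW)^{l+m+1})^{\dag}$; for \eqref{represwmwgi01item03} it applies $A^{\textcircled{\dag},W}=\left(WAWP_{\mathcal{R}((AW)^{k})}\right)^{\dag}$ from \cite[Theorem 5.2]{wcoreepiFerre} to $A^{\star(m+1)}$, using Lemma \ref{wcoreepprole}\eqref{wcoreepimaitem7} and $P_{\mathcal{R}((A^{\star(m+1)}W)^{d})}=P_{\mathcal{R}((AW)^{k})}$. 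Either import those two Moore--Penrose representations of the weighted core-EP inverse (after which your master form finishes each item in one line), or carry out the coordinate computation you postponed; as written, two of the seven identities remain unproved.
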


\begin{proof}
\eqref{represwmwgi01item01}
Note that
\begin{equation*}
    {\rm rank}((WA)^k)={\rm rank}((WA)^{k+1})
  \leq  {\rm rank}(W(AW)^{k})\leq  {\rm rank}((WA)^{k}),
\end{equation*}
implying that
   \begin{equation*}
    {\rm rank}(W(AW)^{k})={\rm rank}((WA)^{k}).
   \end{equation*}
Then, using  \cite[Lemma 2(c)]{wdraziWeire}, i.e.,
 $\mathcal{R}(A^{D,W})=\mathcal{R}((AW)^k)$,
 we have
  \begin{equation*}
    \mathcal{R}(WA^{D,W})=\mathcal{R}(W(AW)^k)=\mathcal{R}{((WA)^k)}.
  \end{equation*}
Thus, by \eqref{Wmwgmpmaineq} and
 \cite[Theorem 4.1]{wcoreepigaore}, i.e.,
  $ A^{\textcircled{\dag},W}=A^{D,W}P_{\mathcal{R}((WA)^k)}$,
it follows that
 \begin{align*}
 A^{\textcircled{w}_m,W} &
 = (A^{\textcircled{\dag},W})^{\star (m+1)}WA^{\star m}
 =A^{D,W}\left(P_{\mathcal{R}((WA)^k)}WA^{D,W}\right)^m P_{\mathcal{R}((WA)^k)}WA^{\star m}\\
  &=(A^{D,W})^{\star (m+1)} P_{\mathcal{R}((WA)^k)}WA^{\star m}.
 \end{align*}
 \par
 \eqref{represwmwgi01item02}
Let  $l\geq  k$. Since
\begin{equation*}
  \mathcal{N}(W(AW)^{l+1}) \subseteq   \mathcal{N}((AW)^{l+2}) = \mathcal{N}((AW)^{l}),
\end{equation*}
 using  \cite[Theorem 2.10]{wcoreepigaore}, i.e.,
 $A^{\textcircled{\dag},W}=(AW)^l(W(AW)^{l+1})^{\dag}$, and
 the fact $B^{\dag}B=P_{\mathcal{R}({B^*})}$ for $B\in\mathbb{C}^{q\times n}$
yields that
\begin{equation*}
   A^{\textcircled{\dag},W} W A^{\textcircled{\dag},W}
 =  (AW)^l(W(AW)^{l+1})^{\dag} W (AW)^{l+1}(W(AW)^{l+2})^{\dag}=
(AW)^l(W(AW)^{l+2})^{\dag}.
\end{equation*}
Then, reusing the method similar to that used above,  we have
\begin{align*}
(A^{\textcircled{\dag},W})^{\star(m+1)}=
  A^{\star l}W(WA^{\star (l+m+1)}W)^{\dag}.
\end{align*}
Evidently, the item \eqref{represwmwgi01item02} follows by \eqref{Wmwgmpmaineq}.
  \par
  \eqref{represwmwgi01item03}
 Let $d={\rm Ind}((AW)^{m+1})$. It can easily be verified that   $d(m+1)\geq {\rm Ind}(AW)$,  which implies
 \begin{equation*}
    P_{\mathcal{R}((A^{\star (m+1)}W)^d)}=
    P_{\mathcal{R}((AW)^{ d(m+1)})}=P_{\mathcal{R}((AW)^{k})}.
 \end{equation*}
Thus, according to Lemma   \ref{wcoreepprole}\eqref{wcoreepimaitem7} and \cite[Theorem 5.2]{wcoreepiFerre}, i.e.,
 $A^{\textcircled{\dag},W}=
 \left(WAWP_{\mathcal{R}((AW)^k)}\right)^{\dag}$, we  derive
 \begin{align}
    (A^{\textcircled{\dag},W})^{\star (m+1)} &= (A^{\star (m+1)})^{\textcircled{\dag},W}=
    \left(WA^{\star (m+1)}WP_{\mathcal{R}((A^{\star (m+1)}W)^d)}\right)^{\dag}\nonumber \\
    &=
    \left(WA^{\star (m+1)}W P_{\mathcal{R}((AW)^{k})}\right)^{\dag}\label{Awcormj1eqeq}.
 \end{align}
 Substituting \eqref{Awcormj1eqeq} into \eqref{Wmwgmpmaineq} yields
 $A^{\textcircled{w}_m,W}=
    (WA^{\star (m+1)}WP_{\mathcal{R}((AW)^k)})^{\dag}WA^{\star m}
     $.
 \par
\eqref{represwmwgi01item04}
It follows from \eqref{WAWcoreepeqwacoreeqeq}  and Lemma \ref{wcoreepprole}\eqref{wcoreepimaitem1} that
\begin{equation}\label{AcoreEPWciriceq}
  AWA^{\textcircled{\dag},W}WA^{\textcircled{\dag},W}
  = A((WA)^{\textcircled{\dag}})^2=A^{\textcircled{\dag},W},
\end{equation}
which implies
\begin{equation}\label{ecoreepimm1eq}
  A^{\star (m-1)}W(A^{\textcircled{\dag},W})^{\star m}=A^{\textcircled{\dag},W}.
\end{equation}
Using \cite[Corollary 2]{wweakgroupFerre}, i.e.,
\begin{equation}\label{BWWBcoreEPWBeq}
B^{\textcircled{w},W}=(B^{\textcircled{\dag},W})^{\star 2}WB
\end{equation}
 for $B\in\mathbb{C}^{q\times n}$, by \eqref{Wmwgmpmaineq}, \eqref{ecoreepimm1eq}  and Lemma \ref{wcoreepprole}\eqref{wcoreepimaitem7},
 we have
\begin{align*}
A^{\textcircled{w}_m,W}&
=A^{\textcircled{\dag},W}W(A^{\textcircled{\dag},W})^{\star m}WA^{\star m}
=A^{\star (m-1)}W((A^{\textcircled{\dag},W})^{\star m})^{\star 2}WA^{\star m}\\
&=
A^{\star (m-1)}W((A^{\star m})^{\textcircled{\dag},W})^{\star 2}WA^{\star m}
=A^{\star (m-1)}W(A^{\star m})^{\textcircled{w},W}.
\end{align*}
\par
\eqref{represwmwgi01item06}
In terms of  \eqref{Wmwgmpmaineq}, \eqref{AcoreEPWciriceq} and  \eqref{BWWBcoreEPWBeq}, we  get
\begin{align*}
A^{\textcircled{w}_m,W}
 &= (A^{\textcircled{\dag},W})^{\star (m+1)}WA^{\star m}
=
(A^{\textcircled{\dag},W})^{\star 2} W(AW(A^{\textcircled{\dag},W})^{\star 2})^{\star (m-1)}WAWA^{\star (m-1)}
\\&=((A^{\textcircled{\dag},W})^{\star 2}WA)^{\star m}WA^{\star (m-1)}
=(A^{\textcircled{w},W})^{\star m}WA^{\star (m-1)}.
\end{align*}
\par
\eqref{represwmwgi01item001}
From Lemma \ref{wcoreepprole}\eqref{wcoreepimaitem3} and \cite[Proposition 3.1]{wcoreepigaore}, i.e., $\mathcal{R}(A^{\textcircled{\dag},W})=\mathcal{R}((AW)^k)$,  we obtain
  \begin{equation}\label{AWAWAAWcoreepeq}
   A^{\textcircled{\dag},W}WAW AW  A^{\textcircled{\dag},W}=AW  A^{\textcircled{\dag},W}.
  \end{equation}
By the fact that
$
 (B^m)^{\#}={(B^{\#})}^m
$
for $B\in\mathbb{C}^{n\times n} $ and ${\rm Ind}(B) \leq 1 $,   and   \cite[Corollary 2.1]{wdrazinclinere}, i.e.,
$ C^{\#,W}=C((WC)^{\#})^2$ for $C\in\mathbb{C}^{q\times n}$ satisfying ${\max}\{{\rm Ind}(CW),{\rm Ind}(WC)\} \leq 1$,
it can be verified that
\begin{equation}\label{conWgroupeq}
  (C^{\#,W})^{\star m}=   (C ^{\star m})^{\#,W}.
\end{equation}
Then, using  item \eqref{represwmwgi01item06} and \cite[Theorem 7]{wweakgroupFerre}, i.e.,
$
  A^{\textcircled{w},W}=(AWA^{\textcircled{\dag},W}WA)^{\#,W},
$
by \eqref{conWgroupeq}  and \eqref{AWAWAAWcoreepeq}  we have
\begin{align*}
A^{\textcircled{w}_m,W} =& (A^{\textcircled{w},W})^{\star m}WA^{\star (m-1)}=
 ((AWA^{\textcircled{\dag},W}WA)^{\#,W})^{\star m}WA^{\star (m-1)}\\
 &=((AWA^{\textcircled{\dag},W}WA)^{\star m})^{\#,W}WA^{\star (m-1)}=
 \left((AW)^mA^{\textcircled{\dag},W}WA\right)^{\#,W}WA^{\star (m-1)}.
\end{align*}

\eqref{represwmwgi01item05}
It follows  from \eqref{WAWcoreepeqwacoreeqeq} and Lemma \ref{wcoreepprole}\eqref{wcoreepimaitem1} that
\begin{align*}
(A^{\textcircled{\dag},W})^{\star (m+1)}= A^{\textcircled{\dag},W} (WA^{\textcircled{\dag},W})^{m}= A^{\textcircled{\dag},W} ((WA)^{\textcircled{\dag}})^{m}=
A((WA)^{\textcircled{\dag}})^{m+2}.
\end{align*}
Then,  using \cite[Theorem 3.1]{wmweakgroupJiare}, i.e.,
\begin{equation}\label{mweakgroum11eq}
B^{\textcircled{w}_m}=(B^{\textcircled{\dag}})^{m+1}B^{ m}
\end{equation}
 for $B\in\mathbb{C}^{n\times n}$, by  \eqref{Wmwgmpmaineq} and  \eqref{AcoreEPWciriceq} as $W=I_n$,
we obtain
\begin{align*}
 A^{\textcircled{w}_m,W}
& =(A^{\textcircled{\dag},W})^{\star (m+1)}WA^{ \star m}
  =  A((WA)^{\textcircled{\dag}})^{m+2}(WA)^{ m}
\\&=
 A((WA)^{\textcircled{\dag}})^{m+1}(WA)^{ m}((WA)^{\textcircled{\dag}})^{m+1}(WA)^{ m}
  = A((WA)^{\textcircled{w}_m})^2.
\end{align*}
 This finishes the proof.
\end{proof}

\begin{theorem}\label{corechawmwgth}
 Let $A\in\mathbb{C}^{q\times n}$, $W(\neq 0)\in\mathbb{C}^{n\times q}$, $m\in\mathbb{Z}^{+}$ and  $k=\max\{{\rm Ind}(AW),{\rm Ind}(WA)\}$.  Then:
 \begin{enumerate}[$(1)$]
    \item\label{corechawmwgitem02}  $A^{\textcircled{w}_m,W}=A^{\star (k-m)}W(A^{\star (k+1)})^{\textcircled{\#},W}WA^{\star m}$ for $ k \geq m$;
    \item \label{corechawmwgitem03}  $A^{\textcircled{w}_m,W}=A^{\star (k-m-1)}W(A^{\star k})^{\textcircled{\#},W}WA^{\star m}$ for $ k \geq  m +1$.
 \end{enumerate}
\end{theorem}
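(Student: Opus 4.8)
The plan is to rewrite the two weighted core inverses on the right-hand sides as $\star$-powers of the weighted core-EP inverse $A^{\textcircled{\dag},W}$, and then to collapse the resulting products by means of an absorption identity extending \eqref{ecoreepimm1eq}. First I would check that the weighted core inverses are well defined and agree with the corresponding weighted core-EP inverses. For $B=A^{\star(k+1)}$ one has $BW=(AW)^{k+1}$ and $WB=(WA)^{k+1}$; since ${\rm Ind}(AW)\leq k$ and ${\rm Ind}(WA)\leq k$, the standard fact ${\rm Ind}(M^{s})=\lceil {\rm Ind}(M)/s\rceil$ gives $\max\{{\rm Ind}(BW),{\rm Ind}(WB)\}\leq 1$. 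Hence, by Definition \ref{vardenfth}(2), the weighted core inverse of $B$ exists and equals its weighted core-EP inverse, so that $(A^{\star(k+1)})^{\textcircled{\#},W}=(A^{\star(k+1)})^{\textcircled{\dag},W}=(A^{\textcircled{\dag},W})^{\star(k+1)}$, the last equality being Lemma \ref{wcoreepprole}\eqref{wcoreepimaitem7}. The identical computation for $B=A^{\star k}$ (legitimate because $k\geq m+1\geq 1$) yields $(A^{\star k})^{\textcircled{\#},W}=(A^{\textcircled{\dag},W})^{\star k}$, which is what item (2) needs.

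The core of the argument is the absorption identity, valid for integers $j\geq 0$ and $p\geq 1$,
\[
A^{\star j}W(A^{\textcircled{\dag},W})^{\star(j+p)}=(A^{\textcircled{\dag},W})^{\star p}.
\]
I would establish it from the closed form $(A^{\textcircled{\dag},W})^{\star s}=A((WA)^{\textcircled{\dag}})^{s+1}$ (obtained by combining Lemma \ref{wcoreepprole}\eqref{wcoreepimaitem1} with \eqref{WAWcoreepeqwacoreeqeq}) together with $A^{\star j}W=(AW)^{j}$ and $(AW)^{j}A=A(WA)^{j}$. Writing $M=WA$, these reduce the left-hand side to $A\,M^{j}(M^{\textcircled{\dag}})^{j+p+1}$, so the claim becomes the purely core-EP statement $M^{j}(M^{\textcircled{\dag}})^{j+p+1}=(M^{\textcircled{\dag}})^{p+1}$. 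The latter follows by iterating $M(M^{\textcircled{\dag}})^{2}=M^{\textcircled{\dag}}$ (that is, \eqref{WAcoreEPWWAcEpeq}) to get $M^{j}(M^{\textcircled{\dag}})^{j+1}=M^{\textcircled{\dag}}$, and then appending the remaining factor $(M^{\textcircled{\dag}})^{p}$.

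With both ingredients in hand the theorem is immediate. For item (1) I take $j=k-m\geq 0$ and $p=m+1$, so $j+p=k+1$ and the absorption identity gives $A^{\star(k-m)}W(A^{\textcircled{\dag},W})^{\star(k+1)}=(A^{\textcircled{\dag},W})^{\star(m+1)}$; substituting this into $A^{\star(k-m)}W(A^{\star(k+1)})^{\textcircled{\#},W}WA^{\star m}$ and invoking \eqref{Wmwgmpmaineq} returns $A^{\textcircled{w}_m,W}$. Item (2) is the same computation with $j=k-m-1\geq 0$ and $p=m+1$. The main obstacle I anticipate is getting the index bookkeeping exactly right so that the weighted core inverses genuinely exist — the hypotheses $k\geq m$ and $k\geq m+1$ are forced precisely by the requirement $j\geq 0$ — and, more conceptually, formulating the absorption identity at the right generality: the special case \eqref{ecoreepimm1eq} only collapses the product down to the single factor $A^{\textcircled{\dag},W}$, whereas here the power $(A^{\textcircled{\dag},W})^{\star(m+1)}$ must survive, which is exactly why the free parameter $p$ is indispensable.
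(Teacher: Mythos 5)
You are correct, and your argument splits naturally into a part that matches the paper and a part that improves on it. For item \eqref{corechawmwgitem02} your route is essentially the paper's own: both of you replace $(A^{\star(k+1)})^{\textcircled{\#},W}$ by $(A^{\star(k+1)})^{\textcircled{\dag},W}=(A^{\textcircled{\dag},W})^{\star(k+1)}$ (index at most one, then Lemma \ref{wcoreepprole}\eqref{wcoreepimaitem7}) and then absorb $A^{\star(k-m)}W$ into the core-EP powers; the paper does this with the $p=1$ case of your absorption identity (the exponent-general form of \eqref{ecoreepimm1eq}) after splitting off the factor $(A^{\textcircled{\dag},W})^{\star m}$, whereas you do it with general $p$ in one stroke --- a cosmetic difference. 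For item \eqref{corechawmwgitem03} you genuinely depart from the paper, and your way is shorter and more uniform: the paper treats $k=m+1$ and $k>m+1$ separately, and in the second case works backward from item \eqref{corechawmwgitem02}, invoking the core-nilpotent decomposition of $WA$ (via \eqref{groupconkj}) and the formula $B^{\textcircled{\#}}=B^{\#}P_{\mathcal{R}(B)}$ of \cite{Bakscoreinversere} to trade $((WA)^{k+1})^{\#}$ for $((WA)^{k})^{\#}$. You bypass all of that by noting that ${\rm Ind}((AW)^{k})\leq 1$ and ${\rm Ind}((WA)^{k})\leq 1$ hold just as for the $(k+1)$-st powers, so $(A^{\star k})^{\textcircled{\#},W}=(A^{\textcircled{\dag},W})^{\star k}$ for the same reason as in item \eqref{corechawmwgitem02}, after which the single absorption identity with $j=k-m-1$ and $p=m+1$ closes the proof with no case distinction and no group-inverse machinery; this identification of $(A^{\star k})^{\textcircled{\#},W}$ with $(A^{\star k})^{\textcircled{\dag},W}$ is nothing the paper does not already use, since it appears in its own $k=m+1$ case. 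Your index bookkeeping via ${\rm Ind}(M^{s})=\lceil {\rm Ind}(M)/s\rceil$ is equivalent to the paper's rank-chain computation, and your reduction of the absorption identity to the core-EP statement $M^{j}(M^{\textcircled{\dag}})^{j+p+1}=(M^{\textcircled{\dag}})^{p+1}$ with $M=WA$, proved by iterating \eqref{WAcoreEPWWAcEpeq}, is sound. What the paper's longer route exhibits is the link between the statement and the group inverse and core-nilpotent structure of $WA$; what yours buys is brevity and one mechanism that proves both items at once.
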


\begin{proof}
Obviously,
\begin{equation*}
   {\rm rank}((WA)^k)  =  {\rm rank}((WA)^{k+1})= {\rm rank}(W(AW)^{k}A) \leq {\rm rank}((AW)^{k}).
\end{equation*}
Similarly, $ {\rm rank}((AW)^{k}) \leq {\rm rank}((WA)^k) $. So, $ {\rm rank}((WA)^k)= {\rm rank}((AW)^{k}) $.
Hence,
\begin{align*}
 {\rm rank}((WA)^{ 2(k+1)})=&  {\rm rank}((WA)^{ (k+1)}) =
  {\rm rank}((WA)^k)\\
  =&{\rm rank}((AW)^k)=
   {\rm rank}((AW)^{ (k+1)})=   {\rm rank}((AW)^{ 2(k+1)}),
\end{align*}
which implies
 ${\rm Ind} (WA^{\star (k+1)}) = {\rm Ind} (A^{\star (k+1)}W)\leq 1$,  that is,  $((WA)^{k+1})^{\textcircled{\#}}$ and $(A^{\star (k+1)})^{\textcircled{\#},W}$ exist.

\eqref{corechawmwgitem02}
For $ k \geq m$,  it follows from
Lemma \ref{wcoreepprole}\eqref{wcoreepimaitem7}, \eqref{ecoreepimm1eq} and \eqref{Wmwgmpmaineq}  that
 \begin{align*}
A^{\star (k-m)}W(A^{\star (k+1)})^{\textcircled{\#},W}WA^{\star m} &
= A^{\star (k-m)}W(A^{\star (k+1)})^{\textcircled{\dag},W}WA^{\star m}\\
  &= A^{\star (k-m)}W(A^{\textcircled{\dag},W})^{\star (k-m+1)}W(A^{\textcircled{\dag},W})^{\star m}
  WA^{\star m}\\
 &=  A^{\textcircled{\dag},W}W(A^{\textcircled{\dag},W})^{\star m}
  WA^{\star m}=A^{\textcircled{w}_m,W}.
\end{align*}

\eqref{corechawmwgitem03}
If $k=m+1$, then by Lemma \ref{wcoreepprole}\eqref{wcoreepimaitem7} and \eqref{Wmwgmpmaineq} we obtain
\begin{align*}
  A^{\star (k-m-1)}W(A^{\star k})^{\textcircled{\#},W}WA^{\star m} &=(A^{\star k})^{\textcircled{\dag},W}WA^{\star m}=(A^{\textcircled{\dag},W})^{\star k}WA^{\star m}\\
  &=(A^{\textcircled{\dag},W})^{\star (m+1)}WA^{\star m}=A^{\textcircled{w}_m,W}.
  \end{align*}
Thus, we only need to consider the other case, i.e.,   $k>m+1$.
By item \eqref{corechawmwgitem02} and \eqref{WAWcoreepeqwacoreeqeq}, we have
\begin{align}\label{wcoreinviseq}
 A^{\textcircled{w}_m,W}&=A^{\star (k-m)}W(A^{\star (k+1)})^{\textcircled{\#},W}WA^{\star m} = A^{\star (k-m)}((WA)^{ k+1})^{\textcircled{\#}}WA^{\star m}.
\end{align}
By the core-nilpotent decomposition of $WA$, for $s\in\mathbb{Z}^{+} $ it is easy  to check that
\begin{equation}\label{groupconkj}
  (WA)^s((WA)^{k+1})^{\#}=  (WA)^{s-1}((WA)^{k})^{\#}.
\end{equation}
Using \cite[Theorem 1(1)]{Bakscoreinversere}, i.e., $B^{\textcircled{\#}}=B^{\#}P_{\mathcal{R}(B)}$ for
$B\in\mathbb{C}^{n\times n} $ and ${\rm Ind}(B) \leq 1 $,
by  \eqref{wcoreinviseq}, \eqref{groupconkj} and \eqref{WAWcoreepeqwacoreeqeq} we get
\begin{align*}
A^{\textcircled{w}_m,W}&=A^{\star (k-m)}((WA)^{k+1})^{\textcircled{\#}}WA^{\star m}=
A^{\star (k-m)}((WA)^{k+1})^{{\#}}P_{\mathcal{R}((WA)^{k+1})}WA^{\star m}\\
&
=A(WA)^{ k-m-1}((WA)^{k+1})^{{\#}}P_{\mathcal{R}((WA)^{k+1})}WA^{\star m}\\&=A^{\star (k-m-1
)}((WA)^{k})^{{\#}}P_{\mathcal{R}((WA)^{k})}WA^{\star m}
=A^{\star (k-m-1)}W(A^{\star k})^{\textcircled{\#},W}WA^{\star m},
\end{align*}
which completes the proof.
\end{proof}

\begin{remark}
\begin{enumerate}[$(1)$]
  \item
The  items
\eqref{represwmwgi01item01}, \eqref{represwmwgi01item03}, \eqref{represwmwgi01item02} and \eqref{represwmwgi01item06} of
Theorem \ref{represwmwgi01th}, and   Theorem \ref{corechawmwgth}\eqref{corechawmwgitem02}  extend  \cite[Theorem 5.1(a),(d)]{wmweakgroupJiare},  \cite[Theorem 2.5]{MosicWmWGapp}, \cite[Theorem 3.3(b)]{GGinverseFerryre}, and  \cite[Theorem 5.1(b)]{wmweakgroupJiare}, respectively.
  \item If $W=I_n$, by Theorem \ref{represwmwgi01th}\eqref{represwmwgi01item04} we have
\begin{equation*}
  A^{\textcircled{w}_m}=A^{m-1}(A^m)^{\textcircled{w}},
\end{equation*}
which simplifies  \cite[Theorem 5.1(e)]{wmweakgroupJiare}, i.e.,
$A^{\textcircled{w}_m}=A^{m-1}P_{\mathcal{R}{(A^k)}}(A^m)^{\textcircled{w}}$.
 \item Furthermore,    by Theorem \ref{corechawmwgth}\eqref{corechawmwgitem03}, for $ k  \geq  m +1$ and $W=I_n$ we have
\begin{equation}\label{AmWGiathereq}
A^{\textcircled{w}_m}=A^{k-m-1}(A^{ k})^{\textcircled{\#}}A^{m}.
\end{equation}
 It can be found that \eqref{AmWGiathereq}  is just a simplified form of  \cite[Theorem 5.1(c)]{wmweakgroupJiare}, i.e,  for $ k \geq  m +1$,
 \begin{equation*}
 A^{\textcircled{w}_m}=(A^{ k})^{\#} A^{k-m-1}P_{\mathcal{R}(A^k)}A^{m}.
 \end{equation*}
 \end{enumerate}
\end{remark}

The next result is to represent  the $W$-$m$-WG inverse  through three concise  formulas composed  of  the $m$-weak group inverse of $WA$   along with   three  different   generalized inverses of $AW$:  the $m$-weak group inverse,    core-EP inverse and   Drazin inverse.

\begin{theorem}\label{conciseforth}
  Let $A\in\mathbb{C}^{q\times n}$, $W(\neq 0)\in\mathbb{C}^{n\times q}$ and $m\in\mathbb{Z}^{+}$. Then:
    \begin{enumerate}[$(1)$]
    \item\label{WmAWAWmchaWmWgitem01} $A^{\textcircled{w}_m,W}=(AW)^{\textcircled{w}_m}A(WA)^{\textcircled{w}_m}$;
    \item\label{WmAWAWmchaWmWgitem02} $A^{\textcircled{w}_m,W}=(AW)^{\textcircled{\dag}}A(WA)^{\textcircled{w}_m}$;
    \item\label{WmAWAWmchaWmWgitem03}
     $A^{\textcircled{w}_m,W}=(AW)^{D}A(WA)^{\textcircled{w}_m}$.
    \end{enumerate}
\end{theorem}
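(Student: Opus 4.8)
The plan is to handle the three formulas in a single stroke, using the observation that they share the right-hand block $A(WA)^{\textcircled{w}_m}$ and differ only in the left factor $(AW)^{\textcircled{w}_m}$, $(AW)^{\textcircled{\dag}}$ or $(AW)^{D}$. The cornerstone is the asymmetric identity
\begin{equation*}
A(WA)^{\textcircled{w}_m}=(AW)A^{\textcircled{w}_m,W}.
\end{equation*}
To obtain it I would apply \eqref{mweakgroum11eq} to $B=WA$ and then \eqref{WAWcoreepeqwacoreeqeq}, giving $(WA)^{\textcircled{w}_m}=((WA)^{\textcircled{\dag}})^{m+1}(WA)^m=(WA^{\textcircled{\dag},W})^{m+1}(WA)^m$. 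Multiplying by $A$ on the left and regrouping the interlaced factors of $W$ as a $\star$-power yields $A(WA^{\textcircled{\dag},W})^{m+1}=(AW)(A^{\textcircled{\dag},W})^{\star(m+1)}$, while $(WA)^m=WA^{\star m}$; the defining formula \eqref{Wmwgmpmaineq} then closes the computation. This step is routine $\star$-product bookkeeping and should present no difficulty.

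Next I would record two facts attached to the core range $\mathcal{R}((AW)^k)$. First, $\mathcal{R}(A^{\textcircled{w}_m,W})\subseteq\mathcal{R}((AW)^k)$, which is immediate from \eqref{Wmwgmpmaineq} and $\mathcal{R}(A^{\textcircled{\dag},W})=\mathcal{R}((AW)^k)$ (\cite[Proposition 3.1]{wcoreepigaore}). Second, writing $B=AW$, the three inverses agree after the orthogonal projector $P_{\mathcal{R}(B^{k})}$:
\begin{equation*}
B^{\textcircled{w}_m}P_{\mathcal{R}(B^{k})}=B^{\textcircled{\dag}}=B^{D}P_{\mathcal{R}(B^{k})}.
\end{equation*}
Here $B^{D}P_{\mathcal{R}(B^{k})}=B^{\textcircled{\dag}}$ is the $W=I_n$ case of \cite[Theorem 4.1]{wcoreepigaore}; for the other equality I would use $P_{\mathcal{R}(B^{k})}=BB^{\textcircled{\dag}}$ and \eqref{mweakgroum11eq} to get $B^{\textcircled{w}_m}P_{\mathcal{R}(B^{k})}=(B^{\textcircled{\dag}})^{m+1}B^{m+1}B^{\textcircled{\dag}}$, then observe via \eqref{AscoreepeqAcoreseq} that $(B^{\textcircled{\dag}})^{m+1}B^{m+1}=(B^{m+1})^{\textcircled{\dag}}B^{m+1}$ is an idempotent $E$ with $\mathcal{R}(E)=\mathcal{R}(B^{k})=\mathcal{R}(B^{\textcircled{\dag}})$, whence $EB^{\textcircled{\dag}}=B^{\textcircled{\dag}}$.

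To assemble the proof, set $Y:=A(WA)^{\textcircled{w}_m}$. By the first paragraph $Y=(AW)A^{\textcircled{w}_m,W}$, so $\mathcal{R}(Y)\subseteq(AW)\mathcal{R}((AW)^k)=\mathcal{R}((AW)^{k+1})=\mathcal{R}((AW)^k)$ and therefore $P_{\mathcal{R}((AW)^k)}Y=Y$. Combined with the agreement above this forces $(AW)^{\textcircled{w}_m}Y=(AW)^{\textcircled{\dag}}Y=(AW)^{D}Y$, so all three right-hand sides coincide and it suffices to evaluate the middle one. Using the first-paragraph identity again and the fact that $(AW)^{\textcircled{\dag}}(AW)$ is an idempotent with range $\mathcal{R}((AW)^k)\supseteq\mathcal{R}(A^{\textcircled{w}_m,W})$, I get $(AW)^{\textcircled{\dag}}Y=(AW)^{\textcircled{\dag}}(AW)A^{\textcircled{w}_m,W}=A^{\textcircled{w}_m,W}$, which establishes (1), (2) and (3) at once.

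The main obstacle, and the pitfall I would explicitly warn against, is the temptation to prove a symmetric term-by-term intertwining such as $A(WA)^{\textcircled{\dag}}=(AW)^{\textcircled{\dag}}A$ and then raise it to powers. That identity is false: left-multiplying by $AW$ would force $AP_{\mathcal{R}((WA)^k)}=P_{\mathcal{R}((AW)^k)}A$, which already fails for simple $2\times 2$ idempotents because $A$ need not map $\mathcal{R}((WA)^k)^{\perp}$ into $\mathcal{R}((AW)^k)^{\perp}$. What saves the argument is that the correct identity carries the trailing factor $(WA)^m=WA^{\star m}$, which absorbs exactly the nilpotent part responsible for the failure, together with the range inclusion $\mathcal{R}(Y)\subseteq\mathcal{R}((AW)^k)$ that lets the three distinct left inverses of $AW$ collapse onto a single one.
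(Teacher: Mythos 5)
Your proof is correct, and it takes a genuinely different route from the paper's. The paper proves all three identities simultaneously by reducing each side to one common expression: it expands $A^{\textcircled{w}_m,W}$ via \eqref{Wmwgmpmaineq} and Lemma \ref{wcoreepprole}\eqref{wcoreepimaitem1}, converts core-EP factors into Drazin/Moore--Penrose pieces using $C^{\textcircled{\dag}}=C^DC^d(C^d)^{\dag}$ from \eqref{coreEPBDDdDdMPeq}, and telescopes $(AW)^{\textcircled{w}_m}A(WA)^{\textcircled{w}_m}$ through $(AW)^{\textcircled{\dag}}A(WA)^{\textcircled{w}_m}$ and $(AW)^{D}A(WA)^{\textcircled{w}_m}$ until both ends equal $A^{\star k}(WA)^D((WA)^k)^{\dag}W(A^{\textcircled{\dag},W})^{\star m}WA^{\star m}$. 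You instead isolate the intertwining identity $A(WA)^{\textcircled{w}_m}=(AW)A^{\textcircled{w}_m,W}$ --- an identity the paper's proof never states (it is closely related to \cite[Theorem 2.1]{MosicWmWG}, which the paper quotes only in the proof of Theorem \ref{wmwgproth}) --- note that $Y=A(WA)^{\textcircled{w}_m}$ therefore has range inside $\mathcal{R}((AW)^k)$, and let the three left factors collapse onto one another because they agree after the projector $P_{\mathcal{R}((AW)^k)}$; item (2) then follows from the absorption $(AW)^{\textcircled{\dag}}(AW)A^{\textcircled{w}_m,W}=A^{\textcircled{w}_m,W}$, with (1) and (3) as immediate corollaries. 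All your ingredients are available inside the paper: the idempotency and range of $(B^{m+1})^{\textcircled{\dag}}B^{m+1}$ and of $(AW)^{\textcircled{\dag}}AW$ are instances of Lemma \ref{wcoreepprole}\eqref{wcoreepimaitem3} with $W=I_n$ (you identify the range by a rank count; invoking that lemma together with $\mathcal{R}\left((B^{m+1})^{d}\right)=\mathcal{R}(B^k)$ would be cleaner), and $B^{D}P_{\mathcal{R}(B^k)}=B^{\textcircled{\dag}}$ is the unweighted case of \cite[Theorem 4.1]{wcoreepigaore}, which the paper itself uses in Theorem \ref{represwmwgi01th}\eqref{represwmwgi01item01}. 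The trade-off: your argument is more structural, explaining \emph{why} the three left factors are interchangeable (they coincide on the subspace containing $\mathcal{R}(Y)$) instead of verifying it by cancellation, whereas the paper's computation, though more opaque, produces an explicit closed form usable as a normal form and requires no auxiliary range bookkeeping.
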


\begin{proof}
Let  $k=\max\{{\rm Ind}(AW),{\rm Ind}(WA)\}$. Using the fact that $ BB^{\dag}=P_{\mathcal{R}(B)}$ for $B\in\mathbb{C}^{q\times n}$, and
\cite[Theorem 2.3]{pseudocorre}, i.e.,
\begin{equation}\label{coreEPBDDdDdMPeq}
C^{\textcircled{\dag}}=C^DC^d(C^d)^{\dag},
\end{equation}
where
$C\in\mathbb{C}^{n\times n}$ and $d\geq{\rm Ind}(C)$,
by \eqref{Wmwgmpmaineq} and Lemma \ref{wcoreepprole}\eqref{wcoreepimaitem1}
{we deduce}
\begin{equation*}{
 \begin{aligned}
  A^{\textcircled{w}_m,W}&=(A^{\textcircled{\dag},W})^{\star (m+1)}WA^{\star m}
  = A((WA)^{\textcircled{\dag}})^2W(A^{\textcircled{\dag},W})^{\star m}WA^{\star m}\\
  &=A(WA)^D(WA)^k((WA)^k)^{\dag}(WA)^{\textcircled{\dag}}W(A^{\textcircled{\dag},W})^{\star m}WA^{\star m}\\
  &=A(WA)^D(WA)^{\textcircled{\dag}}W(A^{\textcircled{\dag},W})^{\star m}WA^{\star m}
  =A^{\star k}(WA)^D((WA)^k)^{\dag}W(A^{\textcircled{\dag},W})^{\star m}WA^{\star m}.
 \end{aligned}}
\end{equation*}
Moreover, in terms of \eqref{mweakgroum11eq}, Lemma \ref{wcoreepprole}\eqref{wcoreepimaitem3} as $W=I_n$, \eqref{coreEPBDDdDdMPeq}   and \eqref{WAWcoreepeqwacoreeqeq}, it follows that
\begin{equation*}{
  \begin{aligned}
  (AW)^{\textcircled{w}_m}A(WA)^{\textcircled{w}_m}
 =&  ((AW)^{\textcircled{\dag}})^{m+1}(AW)^{ m}A(WA)^{\textcircled{w}_m}
=((AW)^{\textcircled{\dag}})^{m}(AW)^{\textcircled{\dag}}AW(AW)^{ m-1}A(WA)^{\textcircled{w}_m}
\\=& ((AW)^{\textcircled{\dag}})^{m} (AW)^{ m-1}A(WA)^{\textcircled{w}_m}
=...=  (AW)^{\textcircled{\dag}}A(WA)^{\textcircled{w}_m}
\\=& (AW)^D(AW)^k((AW)^k)^{\dag}A(WA)^{\textcircled{w}_m}
 =  (AW)^DA(WA)^{\textcircled{w}_m}
\\=&(AW)^DA ((WA)^{\textcircled{\dag}})^{m+1}(WA)^{ m}
= A^{\star k} (WA)^D((WA)^k)^{\dag}W(A^{\textcircled{\dag},W})^{\star m}W
A^{\star m}.
  \end{aligned}}
\end{equation*}
Consequently, the items \eqref{WmAWAWmchaWmWgitem01}--\eqref{WmAWAWmchaWmWgitem03} hold.
\end{proof}

 Mosi\'{c} et al. in \cite[Lemma 2.1]{MosicWmWGapp} have proposed a block representation for the
$W$-$m$-WG inverse by the weighted core-EP decomposition introduced by \cite[Theorem 4.1]{wcoreepiFerre}.  Inspired by \cite[Theorem 2.4]{wcoreepigaore}, we now show
a new  canonical form of the $W$-$m$-WG inverse from another perspective.
  Let   the
singular value decompositions of  $A\in\mathbb{C}^{q\times n}$  with   ${\rm rank}(A)=r_1$ and $W(\neq 0)\in\mathbb{C}^{n\times q}$ with   ${\rm rank}(W)=r_2$ be
\begin{equation*}
  A=T\left(
       \begin{array}{cc}
         \Sigma_1 & 0 \\
         0 & 0 \\
       \end{array}
     \right)U^* \text{ and }W=S\left(
       \begin{array}{cc}
         \Sigma_2 & 0 \\
         0 & 0 \\
       \end{array}
     \right)V^*,
\end{equation*}
  where   $T,V \in\mathbb{C}^{q\times q}$ and  $U,S\in\mathbb{C}^{n\times n}$ are  unitary matrices, and  $ \Sigma_1 \in\mathbb{C}^{r_1\times r_1}$ and  $\Sigma_2 \in\mathbb{C}^{r_2\times r_2}$ are nonsingular matrices. Denote
  \begin{equation}\label{KLHReq}
  \left(
           \begin{array}{cc}
             K_1 & L_1 \\
              H_1 & R_1 \\
           \end{array}
         \right)=  U^*S
         \text{ and }
    \left(
           \begin{array}{cc}
             K_2 & L_2 \\
              H_2 & R_2 \\
           \end{array}
         \right)=   V^*T,
  \end{equation}
 where  $K_1\in\mathbb{C}^{r_1\times r_2}$, $L_1\in\mathbb{C}^{r_1\times(n- r_2)}$,  $H_1\in\mathbb{C}^{(n-r_1)\times r_2}$, $R_1\in\mathbb{C}^{(n- r_1)\times(n- r_2)}$,
  $K_2 \in\mathbb{C}^{r_2\times r_1}$,  $L_2 \in\mathbb{C}^{r_2\times(q- r_1)}$,
  $H_2 \in\mathbb{C}^{(q-r_2)\times r_1}$ and   $R_2 \in\mathbb{C}^{(q-r_2)\times(q- r_1)}$ are such that $K_1K_1^*+L_1L_1^*=I_{r_1}$ and $K_2K_2^*+L_2L_2^*=I_{r_2}$.
Hence,  \begin{equation}\label{wHSdecAWeq}
  A=T\left(
      \begin{array}{cc}
     \Sigma_1 K_1     &       \Sigma_1 L_1  \\
      0   & 0 \\
      \end{array}
    \right)S^*
    \text{ and }
    W=S\left(
      \begin{array}{cc}
    \Sigma_2 K_2     &       \Sigma_2 L_2  \\
      0   & 0 \\
      \end{array}
    \right)  T^*.
\end{equation}

\begin{theorem}\label{WmWGinHSdecth}
Let $A\in\mathbb{C}^{q\times n}$ and  $W(\neq 0)\in\mathbb{C}^{n\times q}$ be given by \eqref{wHSdecAWeq}, and  let  $m\in\mathbb{Z}^{+}$. Then
\begin{equation}\label{WmWGinHSdeceq}
  A^{\textcircled{w}_m,W}=T\left(
    \begin{array}{cc}
( \Sigma_1K_1)^{\textcircled{w}_m, \Sigma_2K_2}
 & B
    \\
      0 & 0 \\
    \end{array}
  \right)S^*,
\end{equation}
where $ B=( \Sigma_1K_1)^{\textcircled{\dag}, \Sigma_2K_2}
(\Sigma_2K_2( \Sigma_1K_1)^{\textcircled{\dag}, \Sigma_2K_2})^{m}(\Sigma_2 K_2  \Sigma_1 K_1)^{m-1}      \Sigma_2 K_2 \Sigma_1 L_1$.
\end{theorem}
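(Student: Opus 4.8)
The plan is to carry out every computation in the unitary coordinates furnished by $T$ and $S$, reducing the whole statement to block products and to the ``small'' pair $(\Sigma_1K_1,\Sigma_2K_2)$. Write $A_1=\Sigma_1K_1$, $A_2=\Sigma_1L_1$ and $W_1=\Sigma_2K_2$, so that \eqref{wHSdecAWeq} reads $A=T\left(\begin{array}{cc}A_1 & A_2\\0&0\end{array}\right)S^*$ and $W=S\left(\begin{array}{cc}W_1 & \Sigma_2L_2\\0&0\end{array}\right)T^*$. A direct multiplication then gives
\begin{equation*}
WA=S\left(\begin{array}{cc}W_1A_1 & W_1A_2\\0&0\end{array}\right)S^*,\qquad
(WA)^m=S\left(\begin{array}{cc}(W_1A_1)^m & (W_1A_1)^{m-1}W_1A_2\\0&0\end{array}\right)S^*,
\end{equation*}
and, since $A^{\star m}=A(WA)^{m-1}$, the factor $WA^{\star m}=(WA)^m$ occurring in \eqref{Wmwgmpmaineq} is already in block form.

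The crux is a canonical form for the weighted core-EP inverse. Combining $A^{\textcircled{\dag},W}=A((WA)^{\textcircled{\dag}})^2$ from Lemma \ref{wcoreepprole}\eqref{wcoreepimaitem1} with \cite[Theorem 2.4]{wcoreepigaore}, which applies to the same decomposition, I would establish the lemma
\begin{equation*}
A^{\textcircled{\dag},W}=T\left(\begin{array}{cc}(\Sigma_1K_1)^{\textcircled{\dag},\Sigma_2K_2} & 0\\0&0\end{array}\right)S^*.
\end{equation*}
Here the vanishing lower block is automatic, because $A$ has zero lower block in these coordinates and $A^{\textcircled{\dag},W}=A((WA)^{\textcircled{\dag}})^2$ carries $A$ as its left factor; identifying the upper-left block with $A_1^{\textcircled{\dag},W_1}=(\Sigma_1K_1)^{\textcircled{\dag},\Sigma_2K_2}$ is the substantive point taken from \cite[Theorem 2.4]{wcoreepigaore}.

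Writing $X=(\Sigma_1K_1)^{\textcircled{\dag},\Sigma_2K_2}$, the remainder is formal block arithmetic. A short induction on the $W$-product ($P\star Q=PWQ$), using the block forms of $A^{\textcircled{\dag},W}$ and $W$, gives $(A^{\textcircled{\dag},W})^{\star(m+1)}=A^{\textcircled{\dag},W}(WA^{\textcircled{\dag},W})^m=T\left(\begin{array}{cc}X(W_1X)^m & 0\\0&0\end{array}\right)S^*$. Substituting this together with the block form of $WA^{\star m}=(WA)^m$ into \eqref{Wmwgmpmaineq} and multiplying the two block matrices yields
\begin{equation*}
A^{\textcircled{w}_m,W}=T\left(\begin{array}{cc}X(W_1X)^m(W_1A_1)^m & X(W_1X)^m(W_1A_1)^{m-1}W_1A_2\\0&0\end{array}\right)S^*.
\end{equation*}
Applying \eqref{Wmwgmpmaineq} to the reduced pair $(A_1,W_1)$, with the same two reductions $(A_1^{\textcircled{\dag},W_1})^{\star(m+1)}=X(W_1X)^m$ and $W_1A_1^{\star m}=(W_1A_1)^m$, shows $A_1^{\textcircled{w}_m,W_1}=X(W_1X)^m(W_1A_1)^m$, so the upper-left block is exactly $(\Sigma_1K_1)^{\textcircled{w}_m,\Sigma_2K_2}$; substituting $A_1=\Sigma_1K_1$, $A_2=\Sigma_1L_1$, $W_1=\Sigma_2K_2$ turns the upper-right block into $B$, which completes the proof.

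The only real obstacle is the weighted core-EP canonical form; everything past it is bookkeeping. It is worth recording one robustness feature to keep the write-up clean: even if $A^{\textcircled{\dag},W}$ had a nonzero upper-right block $Y$, that block would be annihilated in the final product, since in $(A^{\textcircled{\dag},W})^{\star(m+1)}(WA)^m$ it multiplies the zero lower-left block of $(WA)^m$. Hence only the upper-left block $X$ of the weighted core-EP inverse affects the outcome, and the stated formula is insensitive to the off-diagonal content of $A^{\textcircled{\dag},W}$.
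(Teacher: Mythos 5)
Your proposal is correct and takes essentially the same route as the paper: both rest on the block canonical form $A^{\textcircled{\dag},W}=T\left(\begin{smallmatrix}(\Sigma_1K_1)^{\textcircled{\dag},\Sigma_2K_2} & 0\\ 0&0\end{smallmatrix}\right)S^*$ obtained from Lemma \ref{wcoreepprole}\eqref{wcoreepimaitem1} together with \cite[Theorem 2.4]{wcoreepigaore}, then expand $A^{\textcircled{w}_m,W}=A^{\textcircled{\dag},W}(WA^{\textcircled{\dag},W})^{m}(WA)^m$ by block multiplication and identify the upper-left block via the definition applied to the reduced pair $(\Sigma_1K_1,\Sigma_2K_2)$. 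Your explicit identification of that block and the closing robustness remark are harmless additions; no gap.
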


\begin{proof}
Evidently,
\begin{align*}
  WA&=S\left(
      \begin{array}{cc}
    \Sigma_2 K_2     &       \Sigma_2 L_2  \\
      0   & 0 \\
      \end{array}
    \right)   \left(
      \begin{array}{cc}
     \Sigma_1 K_1     &       \Sigma_1 L_1  \\
      0   & 0 \\
      \end{array}
    \right)S^*
    =S\left(
      \begin{array}{cc}
    \Sigma_2 K_2  \Sigma_1 K_1     &         \Sigma_2 K_2 \Sigma_1 L_1  \\
      0   & 0 \\
      \end{array}
    \right) S^* .
\end{align*}
From Lemma \ref{wcoreepprole}\eqref{wcoreepimaitem1} and \cite[Theorem 2.4]{wcoreepigaore}, we know
\begin{equation*}
 A^{\textcircled{\dag},W}=  T
    \left(
    \begin{array}{cc}
  \Sigma_1K_1 ((\Sigma_2K_2\Sigma_1K_1)^{\textcircled{\dag}})^2 & 0
    \\
      0 & 0 \\
    \end{array}
  \right)S^*=
  T
    \left(
    \begin{array}{cc}
( \Sigma_1K_1)^{\textcircled{\dag}, \Sigma_2K_2} & 0
    \\
      0 & 0 \\
    \end{array}
  \right)S^*
  .
\end{equation*}
Then, it follows from \eqref{Wmwgmpmaineq} that
\begin{align*}
  A^{\textcircled{w}_m,W}&=(A^{\textcircled{\dag},W})^{\star (m+1)}WA^{\star m}=
  A^{\textcircled{\dag},W}(WA^{\textcircled{\dag},W})^{m}(WA)^m
  \\&=
    T
    \left(
    \begin{array}{cc}
( \Sigma_1K_1)^{\textcircled{\dag}, \Sigma_2K_2}
(\Sigma_2K_2( \Sigma_1K_1)^{\textcircled{\dag}, \Sigma_2K_2})^{m} & 0
    \\
      0 & 0 \\
    \end{array}
  \right)\\
  & ~~~~~~
  \left(
      \begin{array}{cc}
    (\Sigma_2 K_2  \Sigma_1 K_1)^m     &     (\Sigma_2 K_2  \Sigma_1 K_1)^{m-1}      \Sigma_2 K_2 \Sigma_1 L_1  \\
      0   & 0 \\
      \end{array}
    \right) S^*\\
    &=T\left(
    \begin{array}{cc}
( \Sigma_1K_1)^{\textcircled{w}_m, \Sigma_2K_2}
 & B
    \\
      0 & 0 \\
    \end{array}
  \right)S^*,
\end{align*}
which completes the proof.
\end{proof}

\begin{remark}
 Under the hypotheses of Theorem \ref{WmWGinHSdecth},  in view of Definition \ref{vardenfth}\eqref{vardenfitem04} and \eqref{WmWGinHSdeceq},   we can deduce that
 if  $m=1$, then
  \begin{equation*}
  A^{\textcircled{w},W}=
     T  \left(
    \begin{array}{cc}
      (\Sigma_1K_1)^{\textcircled{w},\Sigma_2K_2} &
        (\Sigma_1K_1)^{\textcircled{\dag},\Sigma_2K_2}\Sigma_2K_2  (\Sigma_1K_1)^{\textcircled{\dag},\Sigma_2K_2}\Sigma_2K_2\Sigma_1L_1 \\
     0& 0 \\
    \end{array}
  \right)S^*,
\end{equation*}
and if $m>1$, then
  \begin{equation*}
  A^{\textcircled{w}_m,W}=
     T  \left(
    \begin{array}{cc}
      (\Sigma_1K_1)^{\textcircled{w}_m,\Sigma_2K_2} &
        (\Sigma_1K_1)^{\textcircled{\dag},\Sigma_2K_2}\Sigma_2K_2  (\Sigma_1K_1)^{\textcircled{w}_{m-1},\Sigma_2K_2}\Sigma_2K_2\Sigma_1L_1 \\
     0& 0 \\
    \end{array}
  \right)S^*.
\end{equation*}
Moreover, if $W=I_n$ and $m\in\mathbb{Z}^{+}$, then
   \begin{equation*}
  A^{\textcircled{w}_m}=
     T  \left(
    \begin{array}{cc}
      (\Sigma_1K_1)^{\textcircled{w}_m} &
(( \Sigma_1K_1)^{\textcircled{\dag}})^{m+1}( \Sigma_1 K_1)^{m-1}   \Sigma_1 L_1 \\
     0& 0 \\
    \end{array}
  \right)T^*,
\end{equation*}
 by which we can   get  \cite[Formula (21)]{Ferreyraweakcoreinref} and \cite[Theorem 6.1]{GGinverseFerryre} immediately.
\end{remark}

\section{Numerical examples}\label{numbersection}

In this section, we give  three numerical examples to
 illustrate Theorems \ref{WAWAWmWeqWaWmWAeq}, \ref{represwmwgi01th}, \ref{corechawmwgth}, \ref{conciseforth} and \ref{WmWGinHSdecth}.
 All  numerical  computations  are performed via MATLAB R2019a on a  personal notebook computer with AMD Ryzen 7 5800H    at 3.20 GHz and 16GB of RAM.
We use ``rank($A$)", ``pinv($A$)", ``svd($A$)" and ``norm($A$,'fro')"  operators in  MATLAB to compute the rank,  Moore-Penrose inverse,   singular value decomposition and Frobenius norm of a complex matrix $A$, respectively.

\begin{example}\label{example01}
Let
\begin{equation*}
  A= {\footnotesize\left(
      \begin{array}{ccccc}
   1+1i&1&1&0&0\\0&0&0+1i&1+1i&0\\0&0&0&1+1i&0\\0&0&0&0+1i&0\\1+1i&1+1i&1&0&0\\0&0&0&0&0\\
      \end{array}
    \right)}
    \text{ and }
    W= {\footnotesize\left(
        \begin{array}{cccccc}
        0+1i&0+1i&0+1i&0+1i&1&0\\1+1i&0&0&0&1+1i&0\\0+1i&0&1+1i&0+1i&0+1i&0\\0&0&0&0&0&0\\0&0&0&0&0&0\\
        \end{array}
      \right)}.
\end{equation*}
Then, ${\rm Ind}(AW)={\rm Ind}(WA)=3$, $k={\rm max}\{{\rm Ind}(AW),{\rm Ind}(WA)\} =3$,   and
\begin{align*}
  &   A^{\textcircled{\dag},W}=A((WA)^{\textcircled{\dag}})^2 \\ &=
    {\footnotesize   \left(
      \begin{array}{ccccc}
-0.0093714-0.0086857i&-0.018743-0.017371i&-0.018057+0.00068571i&0&0
\\0.0035429-0.0019429i&0.0070857-0.0038857i&0.0016-0.0054857i&0&0\\
0&0&0&0&0\\0&0&0&0&0\\-0.0077714-0.014171i&-0.015543-0.028343i&-0.021943-0.0064i&0&0\\0&0&0&0&0\\
      \end{array}
    \right)}.
\end{align*}
If $m=1$, then
\begin{align*}
     &A^{\textcircled{w}_m,W}=  A^{\textcircled{w},W}=  (A^{\textcircled{\dag},W})^{\star 2}WA\\
&=
   {\footnotesize \left(
       \begin{array}{ccccc}
-0.015936-0.019648i&-0.018135-0.010885i&-0.016389-0.0029943i&-0.0028846-0.0092937i&0\\0.007488-0.002816i&0.0050789-0.004352i&0.0025371-0.0046171i&0.0030766+6.4e-05i&0
\\0&0&0&0&0\\0&0&0&0&0\\-0.011264-0.029952i&-0.017408-0.020315i&-0.018469-0.010149i&0.000256-0.012306i&0\\0&0&0&0&0\\
       \end{array}
     \right)}.
\end{align*}
If $m=2$, then
 \begin{align*}
   &A^{\textcircled{w}_m,W}=  (A^{\textcircled{\dag},W})^{\star 3}WA^{\star 2}\\
  & = {\footnotesize \left(
        \begin{array}{ccccc}
    -0.015936-0.019648i&-0.019318-0.011368i&-0.014723-0.0036941i&0.0018101-0.015382i&0\\0.007488-0.002816i&0.0053421-0.0046586i&0.0025805-0.0040474i&0.0044335+0.0020812i&0\\0&0&0&0&0\\0&0&0&0&0
    \\-0.011264-0.029952i&-0.018634-0.021368i&-0.016189-0.010322i&0.0083248-0.017734i&0\\0&0&0&0&0\\
        \end{array}
      \right)}.
 \end{align*}
If $m = 3$, then
\begin{align*}
&A^{\textcircled{w}_m,W}=  A^{D,W} = (A^{\textcircled{\dag},W})^{\star 4}WA^{\star 3}\\
&=
   {\footnotesize
   \left(
     \begin{array}{ccccc}
-0.015936-0.019648i&-0.019318-0.011368i&-0.014723-0.0036941i&0.0023692-0.015264i&0\\0.007488-0.002816i&0.0053421-0.0046586i&0.0025805-0.0040474i&0.0043422+0.0022371i&0\\0&0&0&0&0\\0&0&0&0&0\\-0.011264-0.029952i&-0.018634-0.021368i&-0.016189-0.010322i&0.0089485-0.017369i&0\\0&0&0&0&0\\
     \end{array}
   \right)
   }.
\end{align*}
For $m=2$,
\begin{equation*}
    \Vert WAWA^{\textcircled{w}_m,W}-WA^{\textcircled{w}_{m-1},W}WA \Vert_F=3.2605e-16,
\end{equation*}
and, for  $m=3$,
\begin{equation*}
    \Vert WAWA^{\textcircled{w}_m,W}-WA^{\textcircled{w}_{m-1},W}WA \Vert_F=3.3408e-16,
\end{equation*}
which show the validity of Theorem \ref{WAWAWmWeqWaWmWAeq}.
\end{example}

\begin{example}
Let $A$ and $W$ be given in Example \ref{example01}.
We first consider  Theorem \ref{represwmwgi01th}\eqref{represwmwgi01item01} for $m=1$.
Then,
\begin{align*}
   &(A^{D,W})^{\star (m+1)}P_{\mathcal{R}((WA)^k)}WA^{\star m}\\
   &= {\footnotesize\left(
     \begin{array}{ccccc}
-0.015936-0.019648i&-0.018135-0.010885i&-0.016389-0.0029943i&-0.0028846-0.0092937i&0\\0.007488-0.002816i&0.0050789-0.004352i&0.0025371-0.0046171i&0.0030766+6.4e-05i&0\\0&0&0&0&0\\0&0&0&0&0
\\-0.011264-0.029952i&-0.017408-0.020315i&-0.018469-0.010149i&0.000256-0.012306i&0\\0&0&0&0&0\\
     \end{array}
   \right)}.
\end{align*}
Moreover,
\begin{equation*}
E=A^{\textcircled{w}_m,W} - (A^{D,W})^{\star (m+1)}P_{\mathcal{R}((WA)^k)}WA^{\star m}
\end{equation*}
is called an error matrix for computing the  $W$-$m$-WG inverse.
Thus,  the norm of the the error matrix $E$ is
 \begin{equation*}
 \Vert E \Vert_F=  6.4874\text{e}-17 .
  \end{equation*}
Using  a similar  method   to that  described above, we can test   Theorem \ref{represwmwgi01th}\eqref{represwmwgi01item01}-\eqref{represwmwgi01item05}, Theorem \ref{corechawmwgth}\eqref{corechawmwgitem02}-\eqref{corechawmwgitem03}  and Theorem \ref{conciseforth}\eqref{WmAWAWmchaWmWgitem01}-\eqref{WmAWAWmchaWmWgitem03} in  the cases:  $m=1$,  $m=2$ and $m=3$.
Put  $l=2k$ for Theorem \ref{represwmwgi01th}\eqref{represwmwgi01item02}.
All norms of  error matrices for computing the  $W$-$m$-WG inverse by different  expressions are shown in  Table \ref{tablenorm01}.
\begin{table*}[ht]
 \centering
\setlength{\tabcolsep}{6mm}{
\begin{tabular}{|c|c|c|c|}
  \hline
  & $m=1$ & $m=2$ & $m = 3$ \\  \hline
   \text{Theorem \ref{represwmwgi01th}\eqref{represwmwgi01item01}} &$ 6.4874e-17$&$1.2023e-16$&$1.4814e-16$\\ \hline
 \text{Theorem \ref{represwmwgi01th}\eqref{represwmwgi01item02}} &$ 4.012e-17$&$3.6802e-17$&$5.6061e-17$\\ \hline
\text{Theorem \ref{represwmwgi01th}\eqref{represwmwgi01item03}} &$4.0701e-17$&$5.5611e-17$&$1.0742e-16$\\ \hline
\text{Theorem \ref{represwmwgi01th}\eqref{represwmwgi01item04}} &$4.5807e-18$&$1.3326e-16$&$2.5384e-17$\\ \hline
\text{Theorem \ref{represwmwgi01th}\eqref{represwmwgi01item06}} &$ 4.5807e-18$&$2.5012e-17$&$4.901e-17$\\ \hline
\text{Theorem \ref{represwmwgi01th}\eqref{represwmwgi01item001}} &$ 1.0638e-16$&$1.0651e-16$&$1.1824e-16$\\ \hline
\text{Theorem \ref{represwmwgi01th}\eqref{represwmwgi01item05}} &$ 1.0797e-17$&$1.6188e-17$&$1.0937e-17$\\ \hline
\text{Theorem \ref{corechawmwgth}\eqref{corechawmwgitem02}} &$ 3.9238e-17$&$5.3969e-17$&$6.8215e-17$\\ \hline
\text{Theorem \ref{corechawmwgth}\eqref{corechawmwgitem03}} &$ 1.8059e-17$&$3.1985e-17$& NAN  \\ \hline
\text{Theorem \ref{conciseforth}\eqref{WmAWAWmchaWmWgitem01}} &$ 3.2352e-17$&$4.8537e-17$&$5.7766e-17$\\ \hline
\text{Theorem \ref{conciseforth}\eqref{WmAWAWmchaWmWgitem02}} &$ 1.0816e-17$&$1.0607e-17$&$1.5265e-17$\\ \hline
\text{Theorem \ref{conciseforth}\eqref{WmAWAWmchaWmWgitem03}}  &$ 1.0738e-17$&$1.2443e-17$&$1.9151e-17$\\ \hline
\end{tabular}}
\caption{Norms of  error matrices for computing the  $W$-$m$-WG inverse}\label{tablenorm01}
\end{table*}
There is a ``NAN"  in Table \ref{tablenorm01} since  the condition    for   Theorem \ref{corechawmwgth}\eqref{corechawmwgitem03}, i.e., $k \geq  m +1 $,   is not satisfied  when $k=3$ and $m=3$.  The experimental results show  the  effectiveness of all items in Theorems \ref{represwmwgi01th}, \ref{corechawmwgth} and \ref{conciseforth} for computing the  $W$-$m$-WG inverse.
\end{example}

\begin{example}
Let  $A$ and $W$ be given in Example \ref{example01}.
In order to illustrate   Theorem \ref{WmWGinHSdecth} in $m=1$,  we  obtain  that
singular value decompositions of  $A$ and $W$  are
\begin{equation*}
  A=T\left(
       \begin{array}{cc}
         \Sigma_1 & 0 \\
         0 & 0 \\
       \end{array}
     \right)U^* \text{ and }W=S\left(
       \begin{array}{cc}
         \Sigma_2 & 0 \\
         0 & 0 \\
       \end{array}
     \right)V^*,
\end{equation*}
where
\begin{equation*}
 \Sigma_1  ={\footnotesize\left(
     \begin{array}{cccc}
3.0071&0&0&0\\0&2.2773&0&0\\0&0&0.72813&0\\0&0&0&0.49125\\
     \end{array}
   \right)},~~
 \Sigma_2  = {\footnotesize\left(
               \begin{array}{ccc}
      3.131&0&0\\0&1.8283&0\\0&0&0.9244\\
               \end{array}
             \right)},
\end{equation*}
\begin{align*}
T={\tiny\left(
     \begin{array}{cccccc}
-0.50271-0.39152i&0.14048+0.086303i&-0.14263+0.39541i&0.084912-0.6188i&-1.2467e-16-3.3295e-17i&0\\0.1881-0.1881i&0.45451-0.45451i&-0.40098+0.40098i&-0.31185+0.31185i&-1.0803e-16-9.5631e-17i&0\\0.062261-0.062261i&0.41582-0.41582i&0.32471-0.32471i&0.22609-0.22609i&0.45007+0.36162i&0\\0.062261-7.1828e-19i&0.41582+2.8515e-17i&0.32471+1.138e-15i&0.22609-8.9338e-17i&-0.088455-0.81169i&0\\-0.44711-0.5583i&0.11339+0.16757i&0.12639-0.41165i&-0.26694+0.43677i&1.2467e-16+3.3295e-17i&0\\0&0&0&0&0&1\\
    \end{array}
  \right)},
\end{align*}
\begin{equation*}
   U={\footnotesize
   \left(
     \begin{array}{ccccc}
  -0.63172&0.22296&-0.044601&-0.7411&0\\-0.50152-0.16717i&0.18506+0.061686i&-0.58765-0.19588i&0.51855+0.17285i&0\\-0.37842-0.37842i&-0.088105-0.088105i&0.5284+0.5284i&0.26426+0.26426i&0\\2.1684e-18-0.18722i&1.9082e-17-0.94695i&6.6613e-16-0.23643i&-4.1633e-17-0.11107i&0\\0&0&0&0&1\\
     \end{array}
   \right)
   },
\end{equation*}
\begin{equation*}
   S={\footnotesize
   \left(
     \begin{array}{ccccc}
-0.58628&0.65568&-0.47577&0&0\\-0.46045+0.17703i&-0.25478+0.5693i&0.21628+0.56644i&0&0\\-0.63166-0.11802i&-0.1924-0.37954i&0.51324-0.37763i&0&0\\0&0&0&0&1\\0&0&0&1&0\\
     \end{array}
   \right)
   },
\end{equation*}
\begin{equation*}
   V={\tiny
   \left(
     \begin{array}{cccccc}
-0.12822+0.5926i&-0.035558+0.19734i&0.43822+0.33826i&-0.4723+0.2219i&-0.062903+0.097214i&0\\-5.5511e-17+0.18725i&-1.6653e-16-0.35864i&-5.5511e-17+0.51467i&0.62028+0.1453i&-0.15968+0.37427i&0\\-0.23944+0.35131i&-0.31283-0.46099i&0.1467-0.44905i&0.1051-0.073925i&-0.40858-0.3198i&0\\-0.037694+0.389i&-0.20759-0.2534i&-0.40851-0.040536i&-0.031171+0.17902i&0.72837-0.08878i&0\\-0.31547+0.40535i&0.32308+0.55598i&-0.076449-0.17641i&0.4723-0.2219i&0.062903-0.097214i&0\\0&0&0&0&0&1\\
     \end{array}
   \right)
   }.
\end{equation*}
Then, by \eqref{KLHReq} we get
\begin{equation*}
  K_1={\footnotesize\left(
        \begin{array}{ccc}
     0.85539-0.36013i&-0.16518-0.2573i&0.04607+0.08919i\\-0.13895+0.01591i&0.18455+0.13756i&-0.043056+0.16997i\\-0.13408+0.077182i&-0.29325-0.48334i&-0.14518-0.76123i\\0.028218+0.30712i&-0.67077+0.2898i&0.59849+0.020921i\\
        \end{array}
      \right)},
\end{equation*}
\begin{equation*}
  K_2={\footnotesize\left(
        \begin{array}{cccc}
       -0.32717+0.63906i&-0.28114-0.48652i&-0.083097+0.024907i&-0.20002-0.055405i\\-0.45048+0.31275i&0.28013+0.55045i&-0.26803+0.0005589i&-0.093821+0.41517i\\-0.30521-0.11319i&-0.10354-0.093851i&0.40138+0.593i&-0.025916-0.14236i\\
        \end{array}
      \right)},
\end{equation*}
\begin{equation*}
       L_1={\footnotesize \left(
        \begin{array}{cc}
      0&2.1684e-18+0.18722i\\0&1.9082e-17+0.94695i\\0&6.6613e-16+0.23643i\\0&-4.1633e-17+0.11107i\\
        \end{array}
      \right)},~~       L_2={\footnotesize \left(
        \begin{array}{cc}
      -0.29314-0.17969i&0\\-0.083453+0.24044i&0\\-0.02732+0.58315i&0\\
        \end{array}
      \right)}.
\end{equation*}
Then,
\begin{equation*}
   \Sigma_1K_1=
   {\footnotesize \left(
        \begin{array}{ccc}
 2.5722-1.0829i&-0.4967-0.7737i&0.13853+0.2682i\\-0.31644+0.036231i&0.42027+0.31326i&-0.098051+0.38708i\\-0.097625+0.056199i&-0.21352-0.35194i&-0.10571-0.55428i\\0.013862+0.15087i&-0.32952+0.14236i&0.29401+0.010278i\\
        \end{array}
      \right)},
\end{equation*}
\begin{equation*}
(\Sigma_1K_1)^{\textcircled{\dag}, \Sigma_2K_2}=
{\footnotesize \left(
        \begin{array}{ccc}
        -0.048933+0.011053i&-0.011592+0.018258i&0.0011722+0.0052455i\\0.0063741-0.0024796i&0.0011938-0.0026961i&-0.00026409-0.00068355i\\-0.00065373+0.0020321i&0.00041818+0.00081978i&0.00021756+7.0544e-05i
        \\0.0026909-0.0014018i&0.00039603-0.0012467i&-0.00014953-0.00028866i\\
        \end{array}
      \right)},
\end{equation*}
\begin{align*}
 & B=( \Sigma_1K_1)^{\textcircled{\dag}, \Sigma_2K_2}
(\Sigma_2K_2( \Sigma_1K_1)^{\textcircled{\dag}, \Sigma_2K_2})^{m}(\Sigma_2 K_2  \Sigma_1 K_1)^{m-1}      \Sigma_2 K_2 \Sigma_1 L_1\\
&={\footnotesize \left(
        \begin{array}{cc}
     0&0.012412+0.0097786i\\0&-0.0018711-0.0010675i\\0&0.00062683-0.00024319i\\0&-0.00087679-0.00038024i\\
        \end{array}
      \right)},
\end{align*}
\begin{equation*}
( \Sigma_1K_1)^{\textcircled{w}_m, \Sigma_2K_2}={\footnotesize \left(
        \begin{array}{ccc}
-0.055403+0.0043184i&0.00936+0.020439i&-0.0016425-0.0015768i\\0.0073826-0.0017023i&-0.0016752-0.0025666i&0.00025334+0.00017939i
\\-0.0010405+0.0021234i&0.00095141+9.9275e-05i&-9.3312e-05+2.6067e-05i\\0.0031733-0.0011079i&-0.00086292-0.0010508i&0.0001204+6.6852e-05i\\
        \end{array}
      \right)},
\end{equation*}
\begin{align*}
 &T\left(
    \begin{array}{cc}
( \Sigma_1K_1)^{\textcircled{w}_m, \Sigma_2K_2}
 & B
    \\
      0 & 0 \\
    \end{array}
  \right)S^*\\& =
  {\tiny \left(
        \begin{array}{ccccc}
        -0.015936-0.019648i&-0.018135-0.010885i&-0.016389-0.0029943i&-0.0028846-0.0092937i&0\\0.007488-0.002816i&0.0050789-0.004352i&0.0025371-0.0046171i&0.0030766+6.4e-05i&0\\-2.2885e-19+1.4714e-18i&-3.4175e-20+1.3858e-18i&4.8876e-19+1.1278e-18i&-7.5894e-19+6.0986e-19i&0\\-8.2362e-19-2.0728e-19i&-7.9967e-19+1.2763e-19i&-5.1758e-19+3.1523e-19i&-7.0473e-19-4.4723e-19i&0
        \\-0.011264-0.029952i&-0.017408-0.020315i&-0.018469-0.010149i&0.000256-0.012306i&0\\0&0&0&0&0\\
        \end{array}
      \right)}.
\end{align*}
Hence, the norm of  error matrix is
\begin{equation*}
\Vert A^{\textcircled{w}_m,W} -   T\left(
    \begin{array}{cc}
( \Sigma_1K_1)^{\textcircled{w}_m, \Sigma_2K_2}
 & B
    \\
      0 & 0 \\
    \end{array}
  \right)S^* \Vert_F=  8.1656e-17.
\end{equation*}
Analogously,  the norms of  error matrices for computing the  $W$-$m$-WG inverse by \eqref{WmWGinHSdeceq} in Theorem \ref{WmWGinHSdecth} are   $1.2248e-16$ and $1.4167e-16$  for $m=2$ and $m=3$, respectively.
So, it is also efficient to  compute the  $W$-$m$-WG inverse in terms of  Theorem
\ref{WmWGinHSdecth}.

\end{example}

\section{Conclusions}\label{Conclusionssection}

 The paper mainly  presents some  properties and representations of the $W$-$m$-WG inverse, in which it not only generalizes some results of the $W$-weighted weak group inverse,   $W$-weighted Drazin inverse and  $m$-weak group inverse  but also provides some new results  about them. In addition, we note that Mosi\'{c} et al. \cite{MosicWmWG} have introduced the weighted version for the generalized group inverse  \cite{GGinverseFerryre} and given its properties.
Naturally, all results for the $W$-$m$-WG inverse in the paper can be condensed to those of the  $W$-weighted generalized group inverse by taking $m=2$.

We  are convinced  that further explorations for the   $W$-$m$-WG inverse  may  receive more interest. The following are two potential research directions  for the $W$-$m$-WG inverse.
 \begin{enumerate}[$(1)$]
   \item  Continuity,     perturbation analysis  and iterative methods for  the   $W$-$m$-WG inverse are all interesting study directions.

   \item It is possible  to  consider  the  $W$-$m$-WG inverse
  of an operator between two Hilbert spaces as   a generalization of  the  $W$-$m$-WG inverse of a matrix.
 \end{enumerate}

\section*{Conflict of interest}
The authors declare no conflict of interest.

\end{document}